\DeclareMathOperator*{\argmin}{argmin}
\DeclareMathOperator*{\argmax}{argmax}
\definecolor{ao(english)}{rgb}{0.0, 0.5, 0.0}
\newenvironment{thisnote}{\par\color{ao(english)}}{\par}
\numberwithin{equation}{section}
\theoremstyle{plain} 
\newtheorem{thm}{Theorem}[section]
\theoremstyle{remark}
\newtheorem{rem}{Remark}[section]
\definecolor{ForestGreen}{RGB}{34,139,34}
\definecolor{ao(english)}{rgb}{0.0, 0.5, 0.0}
\begin{document}

\title[multiscale Consensus-Based Optimization algorithm]{A multiscale Consensus-Based algorithm for multi-level optimization}
\thanks{
}



\author{Michael Herty}
\address{Michael Herty \newline
Institut f\"ur Geometrie und Praktische Mathematik, RWTH Aachen University, 
Templergraben 55, 52062 Aachen, Germany. \newline
Extraordinary Professor, Department of Mathematics and Applied Mathematics, University of Pretoria, Private Bag X20, Hatfield 0028, South Africa}
\email{\texttt{herty@igpm.rwth-aachen.de}}

\thanks{ The authors thank the Deutsche Forschungsgemeinschaft (DFG, German Research Foundation) for the financial support through 320021702 / GRK2326, 
and for the financial support through 442047500/SFB1481 within the projects B04 (Sparsity fördernde Muster in kinetischen Hierarchien), B05 (Sparsifizierung zeitabhängiger Netzwerkflußprobleme mittels diskreter Optimierung) and B06 (Kinetische Theorie trifft algebraische Systemtheorie). Support received funding from the European Union’s Horizon Europe research and innovation programme under the Marie Sklodowska-Curie Doctoral Network Datahyking (Grant No. 101072546). DK is partially supported by the EPSRC Standard Grant EP/T024429/1. YH has been supported by a Roth Scholarship from Imperial College London.
}

\author{Yuyang Huang}
\address{Yuyang Huang \newline 
Department of Mathematics, Imperial College London, South Kensington Campus SW72AZ London, UK
}
\email{\texttt{yuyang.huang21@imperial.ac.uk}}
\thanks{ 
}

\author{Dante Kalise}
\address{Dante Kalise \newline 
Department of Mathematics, Imperial College London, South Kensington Campus SW72AZ London, UK
}
\email{\texttt{d.kalise-balza@imperial.ac.uk}}
\thanks{
}

\author{Hicham Kouhkouh}
\address{Hicham Kouhkouh \newline 
Department of Mathematics and Scientific Computing, NAWI, University of Graz, 
Heinrichstra{\ss}e 36, 8010, Graz, Austria
}
\email{\texttt{hicham.kouhkouh@uni-graz.at}}
\thanks{}





\date{\today}

\begin{abstract}
A novel multiscale consensus-based optimization (CBO) algorithm for solving bi- and tri-level optimization problems is introduced. Existing CBO techniques are generalized by the proposed method through the employment of multiple interacting populations of particles, each of which is used to optimize one level of the problem. These particle populations are evolved through multiscale-in-time dynamics, which are formulated as a singularly perturbed system of stochastic differential equations. Theoretical convergence analysis for the multiscale CBO model to an averaged effective dynamics as the time-scale separation parameter approaches zero is provided. The resulting algorithm is presented for both bi-level and tri-level optimization problems. The effectiveness of the approach in tackling complex multi-level optimization tasks is demonstrated through numerical experiments on various benchmark functions. Additionally, it is shown that the proposed method performs well on min-max optimization problems, comparing favorably with existing CBO algorithms for saddle point problems.
\end{abstract}

\subjclass[MSC]{65C35, 90C56, 90C26, 49M37, 93C70}
\keywords{Consensus-Based Optimization, multiscale systems, singular perturbations, averaging principle, bi-level optimization}












\maketitle

\section{Introduction}

We are concerned with an important class of multi-level optimization problems where decision variables cascade throughout a hierarchy. As a prototypical example, we consider the bi-level problem formulated as
\begin{equation}\label{bilevel opt}
    \begin{aligned}
        &\min\limits_{x\in\mathbb{R}^n} \, F(x,y)\\
        & \quad\text{s.t.}\quad  y\in \argmin_{y\in\mathbb{R}^m} \, G(x,y),
    \end{aligned}
\end{equation}
where $F,G:\mathbb{R}^{n}\times \mathbb{R}^{m}\to \mathbb{R}$ for some $n,m\in \mathbb{N}$. A lower-level problem  $\min\limits_{y} G(x,y)$ is  embedded within an upper-level problem $\min\limits_{x} F(x,y)$. This bi-level structure is ubiquitous in operations research and planning problems \cite{whittaker2017spatial,calvete2010multiobjective,wang2020bi}. For example, in a decentralized supply chain with one manufacturer and one distributor, the  optimization of the manufacturer's production decisions (e.g., production quantities, pricing strategies) at the upper level and the optimization of the distributor's distribution decisions (e.g., order quantities, transportation routes) at the lower level are  interdependent, leading to a hierarchical structure characteristic of bi-level optimization problems \cite{achamrah2022bi,amirtaheri2017bi}. 
With rapid development of data analysis, bi-level optimization has become increasingly significant 
in the community of signal processing and machine learning. Recent applications includes image processing \cite{crockett2022bilevel,ochs2015bilevel,chen2020flexible}, coreset selection \cite{sun2022learning,borsos2020coresets,zhou2022probabilistic} and robust training of deep neural networks \cite{zuo2021adversarial,zhang2022revisiting,yang2021robust,zhang2022advancing,xue2021rethinking}.  For example, when training large-scale machine learning models, it is crucial to identify the most informative subset of data from a massive entire dataset. This process, known as coreset selection, involves selecting the most representative data samples to form the coreset as lower-level problem and minimizing the model training loss of the selected coreset as upper-level problem \cite{zhang2023introduction}.

Bi-level optimization has gained high attention in tackling sequential decision-making processes owing to their inherent hierarchical structures. However, this also presents considerable challenges when solving it.
Finding optimum for bi-level optimization is generally NP-hard, even for the simplest linear case \cite{besanccon2021complexity}. For linear or convex bi-level problems, existing methods include Karush-Kuhn-Tucker conditions \cite{hansen1992new} and penalty function approaches \cite{zhao1998penalty}. The
presence of  non-convexity and 
high dimensionality of potential problems further  increase
the computational complexity \cite{murty1985some}.  Therefore, heuristic methods have been proposed to solve bi-level optimization problems,  see for instance, Simulated Annealing \cite{anandalingam1989artificial},  Particle Swarm Optimization \cite{jiang2021research}
and Genetic Algorithms \cite{oduguwa2002bi}.

Beyond bi-level problems, multi-level problems 
have broader applicability in diverse areas, ranging from agricultural economics \cite{candler1981potential}, design engineering \cite{barthelemy1988improved}, to optimal actuator placement \cite{KKS18,EKMS}.  In particular, a tri-level optimization problem is formulated as  
\begin{equation}
\begin{aligned} 
\label{trilevel opt}
    & \min _{x\in\mathbb{R}^n} F\left(x, y, r\right)\\ 
    & \quad\text { s.t. } \quad y \in \underset{y\in\mathbb{R}^m}{\operatorname{argmin}}\; G\left(x, y, r\right) \\ 
    & \quad\quad\text { s.t. }\quad r \in \underset{r\in\mathbb{R}^p}{\operatorname{argmin}} \;E\left(x, y, r\right).
\end{aligned}
\end{equation}
However, there is limited literature regarding optimization methods for multilevel problems due to the overwhelming complexity of the multilevel hierarchy. Existing methods include  metaheuristic methods \cite{tilahun2012new}, fixed point type iterations \cite{iiduka2011iterative} and gradient methods for convex problems \cite{sato2021gradient,shafiei2024trilevel}.

In this paper, we are interested in designing a variant of  consensus-based optimization (CBO) algorithms for bi-level and tri-level optimization problems. The CBO method is  a multi-particle derivative-free optimization method, originally designed for solving global non-convex optimization problems in high dimensions \cite{pinnau2017consensus}. The rigorous convergence
analysis of CBO in the mean-field limit has been thoroughly studied over recent years \cite{carrillo2018analytical,fornasier2024consensus,fornasier2025pde,ha2020convergence,ha2022stochastic,huang2022mean,wang2025mathematical}, in particular also in \cite{huang2024uniform,huang2025self,huang2025faithful}, and in the time-discrete setting \cite{ha2021convergence}. Meanwhile, numerous extensions have been proposed, including applications in constrained optimization \cite{borghi2023constrained,fornasier2022anisotropic,bae2022constrained}, multi-objective optimization \cite{borghi2023adaptive}, stochastic optimization \cite{bonandin2024consensus, bonandin2025kinetic}, large-scale machine learning \cite{tsianos2012consensus} and variations with jump diffusion \cite{kalise2023consensus}, with control \cite{huang2024fast}, with truncated diffusion \cite{fornasier2025consensus}, etc.

Of particular interest, Huang, Qiu and Riedl \cite{huang2024consensus} proposed a CBO model (hereafter denoted by \texttt{SP-CBO}) consisting of two interacting populations of particles to find saddle points for min-max optimization problems. Their problem can be written as
\begin{equation}\label{minmax opt}
    \min\limits_{x}\max\limits_{y} F(x,y).
\end{equation}
It can be noted that this is a special case of bi-level optimization. Indeed, 
by choosing $G= - F$  in \eqref{bilevel opt}, then $y\in \argmin -F(x,y)$ is equivalent to $y \in \argmax F(x,y)$, and we have
\begin{equation*}
    \min\limits_{x}\max\limits_{y} F(x,y) 
    \quad \Leftrightarrow \quad
    \left[\;
    \begin{aligned}
        &\min\limits_{x} F(x,y)\\
        & \quad\text{s.t.}\quad  y\in \argmin_y -F(x,y).
    \end{aligned}
    \right.  
\end{equation*}

\paragraph{\bf Contributions} Our contributions are twofold. 
Firstly, we introduce a multiscale CBO algorithm for the optimization problems \eqref{bilevel opt} and \eqref{trilevel opt}. 
We also show how it can be adapted to multi-level optimization problems. 
The main idea relies on an interacting system of multiple populations of particles, each one aiming at optimizing one of the levels in \eqref{bilevel opt}, but running on different time scales. 
Besides the novel application of CBO algorithm, the multiple scaling in time which our model exhibits could be of its own interest, and certainly also be adapted to various other problems and applications. See the discussion in \S \ref{sec: MS CBO}.

Secondly, we show that this multiscale CBO algorithm can be simplified thanks to the particular structure of the dynamics, which is suitable for the application of the so-called averaging principle. We construct the resulting reduced dynamics, referred to as \textit{effective} CBO, as it is customary in homogenization; see \Cref{sec: averaging}. Then we show the convergence of our multiscale CBO model to the effective CBO. Indeed this can be seen as a model reduction technique since the dimension of the resulting dynamics (or the number of particles) decreases. Ultimately, these ideas are put into action in \texttt{Algorithm \ref{algo averaging}} for bi-level problems \eqref{bilevel opt}, and in \texttt{Algorithms \ref{algo: tri-level words} \& \ref{algo: tri-level}} for tri-level problems \eqref{trilevel opt}.

\paragraph{\bf Organization} 
The rest of the paper is structured  as follows. In \Cref{sec: design},   starting from the introduction of the standard CBO model, we construct the multiscale CBO model, discussing the main motivation and mathematical intuition behind it.  \Cref{sec: implement} formalizes a modification of the model presented in the previous section, enhancing its theoretical and numerical suitability for multi-level optimization. Additionally, we present the corresponding algorithm.
 \Cref{sec: generalize} generalizes the model to tackle multi-level optimization problems and presents an algorithm for tri-level optimization problems.  \Cref{sec: test} provides a numerical benchmark over a standardized class of tests, assessing the efficiency of our model. We conclude in  \Cref{sec: conclude} with a brief summary of the results and future perspectives. The appendix \ref{app: proof conv} contains the proof of Theorem \ref{thm: conv}. 

\section{The multiscale CBO method}\label{sec: design}

Starting from the standard CBO method, in this section we introduce a new multiscale CBO algorithm for bi-level optimization problems of the form (\ref{bilevel opt}).

\subsection{The standard CBO model (parametrized)} 
Let $x\in \mathbb{R}^{n}$ be fixed.
The standard CBO methods for a minimization problem 
\begin{equation}\label{eq: max}
    \min\limits_{y\in \mathbb{R}^{m}} G(x,y)
\end{equation}
employs a group of  $M\in\mathbb{N}$ particles $\{Y^{j}_{x,t} \}_{j=1}^M$, which evolves in time $t \in [0,T]$ with $T<+\infty$,  according to the stochastic differential equations (SDEs): 
\begin{equation}\label{eq: dyn Y}
\begin{aligned}
    &\text{d} Y^{j}_{x,t}  = - \lambda_{2} \left(Y^{j}_{x,t} - \mathbf{v}(\widehat{\rho}_{Y_{x,t}},x)\right) \, \text{d} t + \sigma_{2}D\left( Y^{j}_{x,t} - \mathbf{v}(\widehat{\rho}_{Y_{x,t}},x) \right)\,\text{d} W^{Y,j}_{x,t},
\end{aligned}
\end{equation}
where the operator $D(\cdot) = \text{diag}(\cdot)$ maps a $d$-dimensional vector $v$ to a diagonal matrix in $\mathbb{R}^{d\times d}$
whose components are the elements of the vector $v$, and $\lambda_2,\,\sigma_2$ are positive constants. Let us denote $Y_{x,t} := (Y^{1}_{x,t}, \dots, Y^{M}_{x,t}) \in \left(\mathbb{R}^{m}\right)^{M}$, and $\widehat{\rho}_{Y_{x,t}}$ its corresponding empirical measure 
\begin{equation}\label{eq: emp mea Y x}
    \widehat{\rho}_{Y_{x,t}} = \frac{1}{M}\,\sum\limits_{k=1}^{M} \delta_{Y^{k}_{x,t}}.
\end{equation} 
The weight function is given by
\begin{equation}\label{weight beta}
    \varpi_{\beta}(x,y) = \exp(-\beta G(x,y))
\end{equation}
for some $\beta>0$ constant. 
Then the consensus point is then defined as:
\begin{equation}\label{eq: consensus y}
    \mathbf{v}(\widehat{\rho}_{Y_{x,t}},x) =  \sum\limits_{k=1}^{M} Y^{k}_{x,t}\frac{\varpi_{\beta}(x,Y^{k}_{x,t})}{\sum\limits_{r=1}^{M}\varpi_{\beta}(x,Y^{r}_{x,t})} = \int_{\mathbb{R}^{m}} y\,\frac{\varpi_{\beta}(x, y) }{\big\| \varpi_{\beta}(x, \cdot) \big\|_{L^{1}\left(\widehat{\rho}_{Y_{x,t}}\right)}} \,\text{d} \widehat{\rho}_{Y_{x,t}}(y)
\end{equation}
with $\big\| \varpi_{\beta}(x, \cdot) \big\|_{L^{1}\left(\widehat{\rho}_{Y_{x,t}}\right)} = \int_{\mathbb{R}^{m}} \varpi_{\beta}(x, y)\,\text{d}\widehat{\rho}_{Y_{x,t}}(y)$.

\subsection{The coupled CBO model}\label{sec: coupled CBO}
Let us now consider $N$ particles $\{X^{i}_{\cdot}\}_{i=1}^{N}$ in $\mathbb{R}^{n}$, where $n,N$ are fixed integers. To each particle $X^{i}_{\cdot}\in \mathbb{R}^{n}$ with $1 \leq i \leq N$, we associate a set of $M$ particles $(Y^{1}_{X^{i},\cdot}, \dots, Y^{M}_{X^{i},\cdot}) \in \left(\mathbb{R}^{m}\right)^{M}$, where $Y^j_{X^{i},\cdot}\in\mathbb{R}^m$ for any $1\leq j \leq M$. The dynamics of each $Y^{j}_{X^{i},t}$ is governed by \eqref{eq: dyn Y}, where instead of $x$, we now have $X^{i}$, and evolves in time $t\in [0,T]$ with $T <+\infty$. 
\begin{figure}[h]
\centering
\includegraphics[width=1\textwidth]{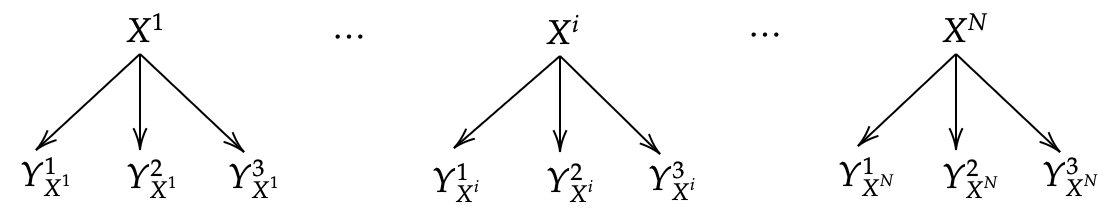}
\caption{Illustration of the particle systems in coupled CBO model with $M=3$ particles for $Y$-system. }
\label{A_0}
\end{figure}

The particles $X^{i}_{\cdot}$, with $1\leq i\leq N$, are designed for the minimization problem
\begin{equation*}
    \min\limits_{x\in \mathbb{R}^{n}} F\big(x, \xi(x)\big),
\end{equation*}
where $\xi(\cdot)$ is a function that \textit{approximately} captures the minimization problem \eqref{eq: max} in a way that we will soon make precise. 
These particles follow a CBO scheme
\begin{equation*}
\begin{aligned}
   & \text{d} X^{i}_{t}  =  -\lambda_{1} \left( X^{i}_{t} - {z}(\widehat{\rho}_{X_{t}}, \xi_{t}(X^{i}_{t}))\right)\,\text{d} t + \sigma_{1} D\left( X^{i}_{t} - {z}(\widehat{\rho}_{X_{t}}, \xi_{t}(X^{i}_{t}) )\right)\,\text{d} W^{X,i}_{t},
\end{aligned}
\end{equation*}
where $\lambda_1, \sigma_1>0$ are constants,  $X_{t} := (X^{1}_{t}, \dots, X^{N}_{t}) \in \left(\mathbb{R}^{n}\right)^{N}$, and $\widehat{\rho}_{X_{t}}$ its corresponding  empirical measure is
\begin{equation*}
    \widehat{\rho}_{X_{t}} = \frac{1}{N}\sum\limits_{i=1}^{N} \delta_{X^{i}_{t}}.
\end{equation*}
The weights functions are given by 
\begin{equation}\label{weight alpha}
    \omega_{\alpha}(x,y) = \exp(-\alpha F(x,y)),
\end{equation}
for some $\alpha>0$ constant. We can then define the consensus point as 
\begin{equation*}
    {z}(\widehat{\rho}_{X_{t}}, \xi_{t}(X^{i}_{t})) = \sum\limits_{k=1}^{N} X^{k}_{t}\frac{\omega_{\alpha}(X^{k}_{t}, \xi_{t}(X^{i}_{t}))}{\sum\limits_{r=1}^{N}\omega_{\alpha}(X^{r}_{t},\xi_{t}(X^{i}_{t}))} = \int_{\mathbb{R}^{n}} x \frac{\omega_{\alpha}(x,\xi_{t}(X^{i}_{t}))}{\|\omega_{\alpha}(\cdot\,,\xi_{t}(X^{i}_{t}))\|_{L^{1}\left(\widehat{\rho}_{X_{t}}\right)}}\,\text{d} \widehat{\rho}_{X_{t}}(x).
\end{equation*}
The function $\xi_{t}(\cdot)$ is the coupling term between the $x$- and $y$-population of particles. It is given as follows
\begin{equation*}
    \xi_{t}(x) = \sum\limits_{j=1}^{M} Y^{j}_{x,t} \frac{\varpi_{\beta}(x,Y^{j}_{x,t})}{\sum\limits_{\ell=1}^{M} \varpi_{\beta}(x,Y^{\ell}_{x,t})} = \int_{\mathbb{R}^{m}} y \frac{\varpi_{\beta}(x,y)}{\| \varpi_{\beta}(x,\cdot)\|_{L^{1}\left(\widehat{\rho}_{Y_{x,t}}\right)}}\,\text{d} \widehat{\rho}_{Y_{x,t}}(y),
\end{equation*}
where we recall the weight function $\varpi_{\beta}(\cdot\,,\cdot)$ is defined in \eqref{weight beta}. 
This is in fact the consensus of the group of $y$-particles associated with each $x$-particle at time $t$. In other words, given a particle $X^{i}_{t}$, $1\leq i \leq N$, the consensus point at time $t$ for its corresponding population of $y$-particles $Y_{X^{i},t} = (Y^{1}_{X^{i},t},\dots,Y^{M}_{X^{i},t})$ is  $\xi_{t}(X^{i}_{t})\in \mathbb{R}^{m}$ defined by
\begin{equation*}
\begin{aligned}
\label{eq:xi/v}
    \xi_{t}(X^{i}_{t}) & = \sum\limits_{j=1}^{M} Y^{j}_{X^{i},t} \frac{\varpi_{\beta}(X^{i}_{t},Y^{j}_{X^{i},t})}{\sum\limits_{\ell=1}^{M} \varpi_{\beta}(X^{i}_{t},Y^{\ell}_{X^{i},t})} = \int_{\mathbb{R}^{m}} y \frac{\varpi_{\beta}(X^{i}_{t},y)}{\| \varpi_{\beta}(X^{i}_{t},\cdot\,)\|_{L^{1}\left(\widehat{\rho}_{Y_{X^{i},t}}\right)}}\,\text{d} \widehat{\rho}_{Y_{X^{i},t}}(y),
\end{aligned}
\end{equation*}
hence we have $\, \xi_{t}(X^{i}_{t}) = \mathbf{v}(\widehat{\rho}_{Y_{X^{i},t}},X^{i}_{t}) \,$ where we recall $\mathbf{v}$ is defined in \eqref{eq: consensus y}. For simplicity of notation, we shall denote it by 
\begin{equation*}
    \mathbf{v}^{i}_{t} := \mathbf{v}(\widehat{\rho}_{Y_{X^{i},t}},X^{i}_{t}).
\end{equation*}
This means the consensus point ${z}$ takes the form
\begin{equation}\label{eq: consensus x empirical}
\begin{aligned}
    {z}(\widehat{\rho}_{X_{t}}, \xi_{t}(X^{i}_{t})) & = \sum\limits_{k=1}^{N} X^{k}_{t}\frac{\omega_{\alpha}(X^{k}_{t}, \mathbf{v}(\widehat{\rho}_{Y_{X^{i},t}},X^{i}_{t}))}{\sum\limits_{r=1}^{N}\omega_{\alpha}(X^{r}_{t},\mathbf{v}(\widehat{\rho}_{Y_{X^{i},t}},X^{i}_{t}))}\\
    & = \int_{\mathbb{R}^{n}} x \frac{\omega_{\alpha}(x,\mathbf{v}(\widehat{\rho}_{Y_{X^{i},t}},X^{i}_{t}))}{\|\omega_{\alpha}(\cdot\,,\mathbf{v}(\widehat{\rho}_{Y_{X^{i},t}},X^{i}_{t}))\|_{L^{1}\left(\widehat{\rho}_{X_{t}}\right)}}\,\text{d} \widehat{\rho}_{X_{t}}(x)\\
    & =: \mathbf{z}(\widehat{\rho}_{X_{t}}, \mathbf{v}^{i}_{t}).
\end{aligned}
\end{equation}
Therefore, the dynamics are given by
\begin{equation}\label{eq: dyn X}
\begin{aligned}
    & \text{d} X^{i}_{t} =  -\lambda_{1} \left( X^{i}_{t} - \mathbf{z}(\widehat{\rho}_{X_{t}}, \mathbf{v}^{i}_{t})\right)\,\text{d} t +\sigma_{1} D\left( X^{i}_{t} - \mathbf{z}(\widehat{\rho}_{X_{t}}, \mathbf{v}^{i}_{t}) \right)\,\text{d} W^{X,i}_{t}, 
\end{aligned}
\end{equation}
and 
\begin{equation}\label{v M i t}
    \mathbf{v}^{i}_{t} = \sum\limits_{j=1}^{M} Y^{j}_{X^{i},t} \frac{\varpi_{\beta}(X^{i}_{t},Y^{j}_{X^{i},t})}{\sum\limits_{\ell=1}^{M} \varpi_{\beta}(X^{i}_{t},Y^{\ell}_{X^{i},t})} = \int_{\mathbb{R}^{m}} y \frac{\varpi_{\beta}(X^{i}_{t},y)}{\| \varpi_{\beta}(X^{i}_{t},\cdot\,)\|_{L^{1}\left(\widehat{\rho}_{Y_{X^{i},t}}\right)}}\,\text{d} \widehat{\rho}_{Y_{X^{i},t}}(y).
\end{equation}

Note that the $X$-particles are interacting between each other, and each $X$-particle is also interacting with its corresponding $Y$-particles. But for any $1\leq i\leq N$, the particles $\{Y_{X^{i},\cdot}^j\}_{j=1}^M$ associated to  $X_\cdot^i$ do not interact with the particles $\{Y_{X^{k},\cdot}^j\}_{j=1}^M$ for any $k\neq i$.

\subsection{The multiscale CBO model}\label{sec: MS CBO}

The presence of a hierarchical structure in the bi-level optimization problem, or in the order of optimization for min-max problems, suggests that the $x$- and $y$-particles are not at the same decision level. 
Indeed, while the $y$-particles  perform optimization, the $x$-particles remain  frozen, waiting for the $y$-particles to reach a steady-state. Numerically, this means that for each element of the $x$-population, we update its corresponding $y$-population for several time steps before we update the $x$-population. In other words, the evolution of $y$-population is on a faster time-scale than the $x$-population. This distinction motivates the introduction of two time scales.  
 
The multi-time scaling of such coupled systems of SDEs can be modelled with \textit{singular perturbation}, that is, given some small parameter $0< \varepsilon \ll 1$, we consider the following dynamics:
\begin{equation}\label{eq: sys eps}
\left\{\;
\begin{aligned}
   & \text{d} X^{i}_{t} =  -\lambda_{1} \left( X^{i}_{t} - \mathbf{z}(\widehat{\rho}_{X_{t}}, \mathbf{v}^{i}_{t})\right)\,\text{d} t + \sigma_{1} D\left( X^{i}_{t} - \mathbf{z}(\widehat{\rho}_{X_{t}}, \mathbf{v}^{i}_{t}) \right)\,\text{d} W^{X,i}_{t}\\
   & \text{d} Y^{j}_{X^{i},t}  = - \lambda_{2}\frac{1}{\varepsilon} \left(Y^{j}_{X^{i},t} - \mathbf{v}^{i}_{t}\right) \, \text{d} t + \sqrt{\frac{1}{\varepsilon}}\sigma_{2}D\left( Y^{j}_{X^{i},t} - \mathbf{v}^{i}_{t} \right)\,\text{d} W^{Y,j}_{X^{i},t}\\
   & \text{Given } \, (X^{i}_{0}, Y^{j}_{i,0}), \quad \quad 1\leq j\leq M,\quad 1\leq i \leq N, \quad 0\leq t \leq T,
\end{aligned}
\right.
\end{equation}
where $\mathbf{v}^{i}_{t}$ defined in \eqref{v M i t} is a function of $(X^{i}_{t}, Y_{X^{i},t}^{1}, \dots \,, Y_{X^{i},t}^{M})$, and
and $\mathbf{z}(\cdot\,,\cdot\,)$ is defined in \eqref{eq: consensus x empirical}. 

The $x$-processes are referred to as the \textit{slow dynamics}, while the $y$-processes are the \textit{fast dynamics}. This denomination can be justified as follows. Let us change the time variable using $\tau = t/\varepsilon$. The system \eqref{eq: sys eps} becomes 
\begin{equation*}
\left\{\;
\begin{aligned}
   & \text{d} \widetilde{X}^{i}_{\tau} =  -\varepsilon\,\lambda_{1} \left( \widetilde{X}^{i}_{\tau} - \mathbf{z}(\widehat{\rho}_{\widetilde{X}_{\tau}}, \mathbf{v}^{i}_{\tau})\right)\,\text{d} \tau + \sqrt{\varepsilon}\,\sigma_{1} D\left( \widetilde{X}^{i}_{\tau} - \mathbf{z}(\widehat{\rho}_{\widetilde{X}_{\tau}}, \mathbf{v}^{i}_{\tau}) \right)\,\text{d} \widetilde{W}^{X,i}_{\tau}\\
   & \text{d} \widetilde{Y}^{j}_{\widetilde{X}^{i},\tau}  = - \lambda_{2}\left(\widetilde{Y}^{j}_{\widetilde{X}^{i},\tau} - \mathbf{v}^{i}_{\tau}\right) \, \text{d} \tau + \sigma_{2}D\left( \widetilde{Y}^{j}_{\widetilde{X}^{i},\tau} - \mathbf{v}^{i}_{\tau} \right)\,\text{d} \widetilde{W}^{Y,j}_{\widetilde{X}^{i},\tau}\\
   & \text{Given } \, (\widetilde{X}^{i}_{0}, \widetilde{Y}^{j}_{\widetilde{X}^{i},0}), \quad \quad 1\leq j\leq M,\quad 1\leq i \leq N,
\end{aligned}
\right.
\end{equation*}
where we use the notation $\widetilde{Z}_{\tau} = Z_{\varepsilon \tau}$, for $Z_{\cdot} = X^{i}_{\cdot}, Y^{j}_{X^{i},\cdot}$, and $\widetilde{W}^{X,i}_{\tau} = (1/\sqrt{\varepsilon}) W^{X,i}_{\varepsilon \tau}$, $ \widetilde{W}^{Y,j}_{i,\tau} =  (1/\sqrt{\varepsilon}) W^{Y,j}_{\widetilde{X}^{i},\varepsilon\tau}$. Thus it can be seen that when $\varepsilon$ is small, the processes $\widetilde{X}^{i}_{\tau}$ tend to be frozen compared to the evolution of  $\widetilde{Y}^{j}_{\widetilde{X}^{i},\tau}$. In other words, by the time  the processes $\widetilde{X}^{i}_{\tau}$ make a significant change in their evolution, the processes $\widetilde{Y}^{j}_{\widetilde{X}^{i},\tau}$ would have already reached their long-time steady state. This steady-state behavior should be captured by their invariant measure under suitable recurrence condition. The treatment (both in theory and practice) of two-scale dynamics of this type is a challenging task. This motivates the introduction of simplified dynamics obtained by passing to the limit $\varepsilon \to 0$ in \eqref{eq: sys eps}. Indeed, in this case, one  obtains a system consisting only of the \textit{slow} component of \eqref{eq: sys eps}, whose dynamics is averaged with respect to the invariant measure of the \textit{fast} component, as it will be explained in more detail in the next section. This phenomenon is known as the \textit{averaging principle} and is the core idea behind singular perturbations and homogenization. In the context of optimization, singular perturbations of SDEs have been used in \cite{chaudhari2018deep} which was then extended to a more general setting in \cite{bardi2023singular, bardi2024deep} where a control parameter have been introduced, playing the role of a controlled learning rate. 
Therein, the singularly perturbed dynamics played the role of an approximation of the \textit{Entropic (controlled) Stochastic Gradient Descent}. Another context in which multiscale methods and singularly perturbed dynamics (also referred to as fast--slow dynamics) arise is financial mathematics. We refer in particular to \cite{fouque2003multiscale,fouque2003singular} and to the book \cite{fouque2000derivatives} with the numerous references therein. Indeed, it appears that many financial models suffer from a fast volatility which motivates the development of \textit{averaging methods} in order to provide a reliable and trackable approximation of these financial signals. Similar problems motivated by physical phenomena are discussed in \cite[Appendix 2]{herty2024relaxation}, 
and in more details in the  books \cite{weinan2011principles, pavliotis2008multiscale}.

A particular advantage of the latter is that it also plays the role of a model reduction technique: from a system consisting of two components (the $x$- and $y$-particles), we obtain a system of a single component (the modified $x$-particles). On the other hand, a challenge associated to the averaging method is that it requires the (explicit) computation of the invariant measure of the \textit{fast} process (the $y$-particles). In practice, we shall bypass this obstacle by substituting the average with respect to the invariant measure (which is an average in space) by the average with respect to time; see \eqref{eq: time average}. This is possible due to ergodicity of the $y$-particles, as established by the celebrated Birkhoff's ergodic theorem (see, e.g., \cite[equation (6.4.2)]{pavliotis2008multiscale}, and more generally \cite{birkhoff1931proof,keane2006easy,da1996ergodicity}). 

Although the averaging principle has been intensively studied since decades, to the best of our knowledge, the system described in \eqref{eq: sys eps} does not satisfy the assumptions in the very recent developments of the averaging principle (see for example \cite{hong2023central, hong2022strong, rockner2021strong, li2022near} and the references therein). Further modifications are thus needed, which is the object of the next section. 

\section{Implementation and convergence of the CBO model with effective dynamics}\label{sec: implement}

\subsection{The modified multiscale CBO model}\label{sec:modified CBO}
Before formalizing the modified multiscale CBO model, we shall introduce some notations. 
Given $R>0$ fixed, we define for $\forall\, v=(v_{k})_{1\leq k \leq d}\in \mathbb{R}^{d}$, two truncation functions 
\begin{equation*}
\begin{aligned}
    &  \phi_{R}(v) = \big(\min\{|v_{k}|,R\}\big)_{1\leq k \leq d}\, \in\mathbb{R}^{d},\\
    &  \psi_{R}(v) = \big(\psi_{R}(v)_{k}\big)_{1\leq k\leq d}, \quad \text{ where }\quad \psi_{R}(v)_{k} = \left\{\;
    \begin{aligned}
        & v_{k} \; , \quad \quad \text{ if } \; |v_{k}|\leq R\\
        & \frac{v_{k}}{|v_{k}|}R \; , \quad \text{ if } \; |v_{k}|>R.
    \end{aligned}
    \right.
\end{aligned}
\end{equation*}
Then, we define a matrix-valued function
\begin{equation*}
    \forall\, v\in \mathbb{R}^{d},\quad D^{\delta}_{R}(v) = \text{diag}(\delta+\phi_{R}(v)) \in \mathbb{R}^{d\times d},
\end{equation*}
where as before, $\text{diag}(\cdot)$ is the diagonal matrix whose components are the elements of the vector to which it is applied, e.g., the $k$-th element in the diagonal of $D^{\delta}_{R}(v)$ is $\delta + \min\{|v_{k}|, R\}>0$. In particular $D^{\delta}_{R}(\cdot)$ is a positive definite matrix. We shall keep the same notation $\phi_{R}, \psi_{R},D^{\delta}_{R}$ regardless of the dimension $d$, as it will be clear from the context. We then define a matrix-valued function
\begin{equation*}
    \forall\, M_{1},\dots,M_{k} \in \mathbb{R}^{d\times d},\quad \mathbb{D}(M_{1},\dots,M_{k}) = \text{Diag}(M_{1},\dots,  M_{k}),
\end{equation*}
where $\text{Diag}(\cdot)$ is the block-diagonal matrix whose diagonal blocks are the matrices $M_{1},\dots,M_{k}$. In particular, we define
\begin{equation*}
    \forall\, v^{1},\dots,v^{k} \in \mathbb{R}^{d},\quad \mathbb{D}^{\delta}_{R}(v^{1},\dots,v^{k}) = \mathbb{D}(D^{\delta}_{R}(v^{1}),\dots,D^{\delta}_{R}(v^{k})).
\end{equation*}
This is a block-diagonal matrix in $\mathbb{R}^{dk\times dk}$. Each of its $j$-th block ($1\leq j\leq k$)  is a diagonal matrix whose diagonal components are $\delta + \min\{|v_{i}^{j}|,R\}$ with $1\leq i \leq d$.

To keep the notation concise in what follows, let us denote the state vector of the particle system using the notation
\begin{equation*}
\begin{aligned}
    & \mathds{X}^{\varepsilon} = \big(\mathbf{X}_{i}\big)_{1\leq i \leq N} \in \mathbb{R}^{nN},\quad \text{ where } \mathbf{X}_i\in\mathbb{R}^{n},\\
    & \mathds{Y}^{\varepsilon} = (\mathbf{Y}_{i})_{1\leq i \leq N} \in \mathbb{R}^{mMN},\quad \text{ where } \mathbf{Y}_{i} = (Y^{j}_{\mathbf{X}_{i}})_{1\leq j \leq M} \in \mathbb{R}^{mM}, \quad \text{ and } Y^{j}_{\mathbf{X}_{i}} \in \mathbb{R}^{m}.
\end{aligned}
\end{equation*} 
The finite dimensional system of singularly perturbed SDEs is stated as follows:
\begin{equation}\label{ms_CBO}
\left\{\;
\begin{aligned}
    &\text{d} \mathds{X}^{\varepsilon}_{t} =  \mathscr{F}(\mathds{X}^{\varepsilon}_t,\mathds{Y}^{\varepsilon}_t)\,\text{d} t + \mathscr{G}(\mathds{X}^{\varepsilon}_t,\mathds{Y}^{\varepsilon}_t)\,\text{d} \mathds{W}^{1}_{t}\\
    &\text{d} \mathds{Y}^{\varepsilon}_{t} = \frac{1}{\varepsilon}\mathscr{B}(\mathds{X}^{\varepsilon}_t,\mathds{Y}^{\varepsilon}_t)\, \text{d} t + \frac{1}{\sqrt{\varepsilon}} \mathscr{H}  (\mathds{X}^{\varepsilon}_t,\mathds{Y}^{\varepsilon}_t)\,\text{d} \mathds{W}^{2}_t\\ 
    & \mathds{X}^{\varepsilon}_{0} = \mathbf{x}_0 \in \mathbb{R}^{nN}, \quad \text{ and } \quad \mathds{Y}^{\varepsilon}_{0} = \mathbf{y}_0 \in \mathbb{R}^{mMN},
\end{aligned}
\right.
\end{equation}
where $\mathds{W}^{1}_{\cdot}$ and $\mathds{W}^{2}_{\cdot}$ are $nN$, $mMN$-dimensional independent standard Brownian motions. 
We can now define the coefficients in \eqref{ms_CBO}. To do so, in the following subsections, we shall drop $\varepsilon$ in the notation and simply write $(\mathds{X}, \mathds{Y})$ instead of $(\mathds{X}^{\varepsilon}, \mathds{Y}^{\varepsilon})$.  

\subsubsection{The drift $\mathscr{B}$ in $\mathds{Y}$-process} \label{sec: drift fast}

The drift term of the fast process $\{\mathds{Y}_t\}_{t\geq0}$ is given by
\begin{equation*}
    \mathscr{B}(\mathds{X},\mathds{Y}) = \big( B(\mathbf{X}_{i},\mathbf{Y}_{i}) \big)_{1\leq i \leq N}, \quad \text{ where } \quad 
    B(\mathbf{X}_{i},\mathbf{Y}_{i}) = \big( b_{i}^{j}(\mathbf{X}_{i},\mathbf{Y}_{i}) \big)_{1\leq j \leq M}
\end{equation*}
and 
\begin{equation*}
\begin{aligned}
    b_{i}^{j}(\mathbf{X}_{i},\mathbf{Y}_{i}) 
    & = -\lambda_{2} \, \psi_{R_{2}} \left(\,Y_{\mathbf{X}_{i}}^j - \kappa \,\sum_{r=1}^M Y_{\mathbf{X}_{i}}^r \frac{\varpi_{\beta}\left(\mathbf{X}_i, Y_{\mathbf{X}_{i}}^r \right)}{\sum_{\ell=1}^M \varpi_{\beta}\left(\mathbf{X}_i, Y_{\mathbf{X}_{i}}^{\ell}\right)}\,\right)\in\mathbb{R}^{m},
\end{aligned}
\end{equation*}
where $\,\kappa\,$ is a positive constant that will be needed later in Theorem \ref{thm: conv}. It guarantees the validity of a recurrence condition which ultimately ensures ergodicity of the process, i.e. existence and uniqueness of its invariant measure (see point (iii) in the proof of Theorem \ref{thm: conv} in Appendix \ref{app: proof conv}, and also Remark \ref{rem:discussion} below). 
Therefore we have 
\begin{equation*}
    b_{i}^{j}(\mathbf{X}_{i},\mathbf{Y}_{i}) \in \mathbb{R}^{m}, \quad B(\mathbf{X}_{i},\mathbf{Y}_{i}) \in \mathbb{R}^{mM} \quad \text{ and } \quad \mathscr{B}(\mathds{X},\mathds{Y})\in \mathbb{R}^{mMN}.
\end{equation*}
Note that, using $\psi_{R_{2}}$, the drift functions $b_{i}^{j}$ take values in balls of radius $\lambda_{2}R_{2}$. 

\subsubsection{The diffusion $\mathscr{H}$ in $\mathds{Y}$-process}

The diffusion term of the fast process $\{\mathds{Y}_t\}_{t\geq0}$ is given by
\begin{equation*}
\begin{aligned}
    \mathscr{H} (\mathds{X},\mathds{Y}) & = \text{Diag}\left(\{H(\mathbf{X}_{i},\mathbf{Y}_{i})\}_{1\leq i \leq N} \right), \\
    \text{where } \; H(\mathbf{X}_{i},\mathbf{Y}_{i}) & = \sigma_{2} \,\mathbb{D}^{\delta_{2}}_{R_{2}}\left(\{h_{i}^{j}(\mathbf{X}_{i},\mathbf{Y}_{i})\}_{1\leq j \leq M}\right),
\end{aligned}
\end{equation*}
and, similar to $b_{i}^{j}$, we have
\begin{equation*}
\begin{aligned}
    h_{i}^{j}(\mathbf{X}_{i},\mathbf{Y}_{i}) 
    & =  Y_{\mathbf{X}_{i}}^j - \kappa\, \sum_{r=1}^M Y_{\mathbf{X}_{i}}^r \frac{\varpi_{\beta}\left(\mathbf{X}_i, Y_{\mathbf{X}_{i}}^r \right)}{\sum\limits_{\ell=1}^{M}\varpi_{\beta}\left(\mathbf{X}_i, Y_{\mathbf{X}_{i}}^\ell\right)}
    \in\mathbb{R}^{m}.
\end{aligned}
\end{equation*}
Therefore we have
\begin{equation*}
    h_{i}^{j}(\mathbf{X}_{i},\mathbf{Y}_{i})\in \mathbb{R}^{m}, \quad H(\mathbf{X}_{i},\mathbf{Y}_{i}) \in \mathbb{R}^{mM\times mM}, \quad \text{ and} \quad \mathscr{H}(\mathds{X},\mathds{Y}) \in \mathbb{R}^{mMN\times mMN}.
\end{equation*}
Using $\mathbb{D}^{\delta_{2}}_{R_{2}}$, the diffusion functions $H$ take values in balls of radius $\sigma_{2}(\delta_{2}+R_{2})$.

\subsubsection{The drift $\mathscr{F}$ in $\mathds{X}$-process}

The drift term of the slow process $\{\mathds{X}_t\}_{t\geq0}$ is given by 
\begin{equation*}
\begin{aligned}
    \mathscr{F}(\mathds{X},\mathds{Y})&  = \big(F_{i}(\mathds{X},\mathbf{Y}_{i})\big)_{1\leq i \leq N}\\
    \text{and }\; F_{i}(\mathds{X},\mathbf{Y}_{i}) & = -\lambda_{1}\, \psi_{R_{1}} \left( \mathbf{X}_{i} - \sum\limits_{r=1}^{N} \mathbf{X}_{r} \frac{\omega_{\alpha}\left(\mathbf{X}_{r},\frac{1}{\kappa}\mathbf{v}^{i}\right)}{\sum\limits_{\ell=1}^{N}\omega_{\alpha}\left(\mathbf{X}_{\ell},\frac{1}{\kappa}\mathbf{v}^{i}\right)} \right),
\end{aligned}
\end{equation*}
where we recall
\begin{equation}\label{consensus v i}
    \mathbf{v}^{i} = \sum\limits_{j=1}^{M} Y^{j}_{\mathbf{X}_{i}} \frac{\varpi_{\beta}(\mathbf{X}_{i},Y^{j}_{\mathbf{X}_{i}})}{\sum\limits_{\ell=1}^{M} \varpi_{\beta}(\mathbf{X}_{i},Y^{\ell}_{\mathbf{X}_{i}})}.
\end{equation}
Therefore we have
\begin{equation*}
    F_{i}(\mathds{X},\mathbf{Y}_{i}) \in \mathbb{R}^{n}\quad \text{ and } \quad \mathscr{F}(\mathds{X},\mathds{Y}) \in \mathbb{R}^{nN}.
\end{equation*}
Note that, using $\psi_{R_{1}}$, the drift functions $F_{i}$ take values in balls of radius $\lambda_{1}R_{1}$. 

\subsubsection{The diffusion $\mathscr{G}$ in $\mathds{X}$-process}

The diffusion term of the slow process $\{\mathds{X}_t\}_{t\geq0}$ is given by 
\begin{equation*}
    \mathscr{G}(\mathds{X},\mathds{Y}) = \sigma_{1}\, \mathbb{D}^{\delta_{1}}_{R_{1}}\left(\{ 
G_{i}(\mathds{X},\mathbf{Y}_{i}) \}_{1\leq i \leq N}\right)
\end{equation*}
and, similar to $F_{i}$, we have
\begin{equation*}
    G_{i}(\mathds{X},\mathbf{Y}_{i})  =\mathbf{X}_{i} - \sum\limits_{r=1}^{N} \mathbf{X}_{r} \frac{\omega_{\alpha}\left(\mathbf{X}_{r},\frac{1}{\kappa}\mathbf{v}^{i}\right)}{\sum\limits_{\ell=1}^{N}\omega_{\alpha}\left(\mathbf{X}_{\ell},\frac{1}{\kappa}\mathbf{v}^{i}\right)}.
\end{equation*}
Therefore we have
\begin{equation*}
    G_{i}(\mathds{X},\mathbf{Y}_{i}) \in \mathbb{R}^{n} \quad \text{ and } \quad \mathscr{G}(\mathds{X},\mathds{Y}) \in \mathbb{R}^{nN\times nN}.
\end{equation*}
Using $\mathbb{D}^{\delta_{1}}_{R_{1}}$, the diffusion function $\mathscr{G}$ takes values in a unit ball of radius $\sigma_{1}(\delta_{1}+R_{1})$.

\begin{rem}\label{rem:discussion} 
The following are some relevant observations regarding the proposed multiscale CBO model. 
\begin{enumerate}  
    \item Without loss of generality, one could consider $R_{1}=R_{2}=:R$, with $R<\infty$ sufficiently large, because the CBO algorithm works in practice over a compact domain, for example an Euclidean ball $\mathcal{B}_0(\bar{R})$ centered in $0$ with radius $\bar{R}$, which contains the initial position of the particles.
    Therefore, if the truncation parameter $R$ is chosen large enough, i.e., $R\gg \bar{R}$, the truncation function becomes $\phi_R(v) = (|v_{k}|)_{1\leq k \leq d}$ since the particles are unlikely to leave a neighborhood of $\mathcal{B}_0(\bar{R})$. This has been made precise in \cite[Proposition 2.3]{bayraktar2025uniform}. It is also worth mentioning that a CBO algorithm with a truncated noise has been studied in \cite{fornasier2024consensus}. 
    \item The parameters $\delta_1, \delta_2$ are chosen to be small constants. In particular, one could set  $\delta_{1} = \delta_{2}=:\delta>0$. They ensure the diffusion matrix remains positive definite, providing enough noise to prevent the dynamics from collapsing to one point very quickly.  Numerically, this still allows the particles to converge to a small neighborhood around the global optimizer.  
    \item In the present context, the parameter $\kappa$ ensures ergodicity of the fast process, thus the validity of the averaging principle as we will show next. Besides that, using such parameter does not alter the convergence of the CBO algorithm, as it has recently been shown in \cite[Corollary 3.4]{huang2025faithful}. Moreover, it allows to prove a uniform-in-time mean-field limit \cite{huang2024uniform}, and to construct a self-interacting CBO \cite{huang2025self}.  
    \item The initial condition being deterministic is necessary for our model in order to justify the limit $\varepsilon\to 0$, as we will do in the next section, and in regard to the existing literature on this topic. Nonetheless, it should be noted that, when implemented numerically, the CBO model is able to perform well with random initialization as it is customary done in CBO methods. 
\end{enumerate}
\end{rem}

\subsection{The effective CBO and the averaging principle}\label{sec: averaging}
At the limit $\varepsilon\to 0$, we obtain \textit{averaged (effective)} dynamics, where the dynamics associated to $\mathds{Y}^{\varepsilon}$ disappear and the dynamics of $\mathds{X}^{\varepsilon}$ is averaged with respect to the limiting (ergodic) behavior of $\mathds{Y}^{\varepsilon}$. This yields an effective CBO model which corresponds to the macro-scale dynamics governed by
\begin{equation}\label{ave_model}
    \text{d} \hat{\mathds{X}}_t = \overline{\mathscr{F}} ( \hat{\mathds{X}}_t ) \, \text{d} t + \overline{\mathscr{G}} (\hat{\mathds{X}}_t ) \, \text{d} \mathds{W}^{1}_t,\quad \hat{\mathds{X}}_{0} = \mathbf{x}_0\in \mathbb{R}^{nN}.
\end{equation}
The new \textit{averaged} coefficients are given by 
\begin{equation*}
    \overline{\mathscr{F}}(x)
    := \int \mathscr{F}(x, y) \,  \Upsilon_{x}(\text{d} y) \; \text{ and } \; \overline{\mathscr{G}}(x)
    := \sqrt{\int \mathscr{G} (x, y) \mathscr{G}^{*}(x,y) \,  \Upsilon_{x}(\text{d} y)},
\end{equation*}
the two integrals are defined on $\mathbb{R}^{mMN}\ni y$, where $y$ is a mute variable, and the averaged coefficients are functions of $x\in \mathbb{R}^{nN}$. 
Here, $\Upsilon_{x}$ is the unique invariant probability measure for the dynamics $\mathds{Y}^{x}_{\cdot}$ solution to the \textit{frozen} equation
\begin{equation}\label{eq: frozen eq}
    \text{d} \mathds{Y}^{x}_{t}  =  \mathscr{B}(x,\mathds{Y}^{x}_t)\, \text{d} t +  \mathscr{H}(x,\mathds{Y}^{x}_t)\,\text{d} \mathds{W}^{2}_t,\quad \mathds{Y}^{x}_{0}=\mathbf{y}_0\in \mathbb{R}^{mMN},
\end{equation}
where we have arbitrarily fixed $x\in\mathbb{R}^{nN}$. This corresponds to the dynamics of the fast process $\mathds{Y}$, where we have frozen the slow process $\mathds{X}$ to some value $x$. To avoid any confusion, let us once again explicitly rewrite the dynamics \eqref{ave_model} 
\begin{equation*}
\begin{aligned}
    \text{d} \hat{\mathds{X}}_t = 
    \int \mathscr{F}(\hat{\mathds{X}}_t, y) \,  \Upsilon_{\hat{\mathds{X}}_t}(\text{d} y) \; \text{d} t  + \sqrt{\int \mathscr{G} (\hat{\mathds{X}}_t, y) \mathscr{G}^{*}(\hat{\mathds{X}}_t,y) \, \Upsilon_{\hat{\mathds{X}}_t}(\text{d} y)} \;\, \text{d} \mathds{W}^{1}_t. 
\end{aligned}
\end{equation*}
In other words, the system \eqref{ms_CBO}, whose dimension is $nN\times mMN$, reduces to \eqref{ave_model}, whose dimension is $nN$. 

The convergence of the slow component $\mathds{X}^{\varepsilon}$ in \eqref{ms_CBO} to the process $\hat{\mathds{X}}$ in \eqref{ave_model} has been studied for decades, yet many open questions still persist. In our setting, we will show that the convergence is guaranteed with the recent results in \cite{rockner2021diffusion} (see also \cite[Theorem 2.5]{rockner2019strong}), provided the positive constant $\,\kappa\,$ in  \Cref{sec: drift fast} is small. We shall denote by $C_{b}^{2+\theta}$, with $\theta\in (0,1)$, the set of bounded and $C^{2}$ functions whose second order derivatives are locally H\"older continuous with exponent $\theta$. 

\begin{thm}\label{thm: conv}
Let $T>0$ and $\theta \in (0,1)$. Then for $\,\kappa\in (0,1)$ small, the process $\mathds{X}^{\varepsilon}$ in \eqref{ms_CBO} converges to the process $\hat{\mathds{X}}$ in \eqref{ave_model} in the sense
\begin{equation*}
    \sup _{t \in[0, T]} \left| \mathbb{E}\left[ \varphi(\mathds{X}_{t}^{\varepsilon}) \right] - \mathbb{E}[\varphi(\hat{\mathds{X}}_{t}) ] \right| \leqslant C_{T} \, \varepsilon^{\frac{\theta}{2}}, \quad \forall\, \varphi \in C_{b}^{2+\theta}(\mathbb{R}^{nN}),
\end{equation*}
where $C_{T}>0$ is a constant independent of $\varepsilon$. 
\end{thm}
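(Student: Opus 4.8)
The plan is to apply the averaging-principle result of R\"ockner--Xie \cite{rockner2021diffusion} (or \cite[Theorem 2.5]{rockner2019strong}) as a black box, so the bulk of the work is verifying its hypotheses for the specific system \eqref{ms_CBO}. I would structure the argument around three checks: (i) regularity and boundedness of the coefficients $\mathscr{F},\mathscr{G},\mathscr{B},\mathscr{H}$; (ii) a dissipativity/recurrence condition on the frozen fast dynamics \eqref{eq: frozen eq} guaranteeing existence and uniqueness of the invariant measure $\Upsilon_x$, with constants uniform in the frozen variable $x$; and (iii) enough smoothness of $x \mapsto \Upsilon_x$ (equivalently, solvability of the associated Poisson equation in $C_b^{2+\theta}$) so that the averaged coefficients $\overline{\mathscr{F}},\overline{\mathscr{G}}$ inherit the regularity needed for the weak-error expansion that yields the rate $\varepsilon^{\theta/2}$.

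For step (i), the key point is that every coefficient has been deliberately built from the truncation functions $\psi_R$, $\phi_R$ and the block-diagonal construction $\mathbb{D}^{\delta}_R$: the drifts $b_i^j$, $F_i$ take values in balls of radius $\lambda_2 R_2$, $\lambda_1 R_1$, and the diffusions $\mathscr{H}$, $\mathscr{G}$ are uniformly bounded with $\mathbb{D}^{\delta_j}_{R_j}$ bounded below by $\delta_j I$, hence uniformly elliptic. Smoothness follows because $\varpi_\beta = \exp(-\beta G)$, $\omega_\alpha=\exp(-\alpha F)$ are smooth (assuming $F,G$ smooth enough — this should be stated as a standing assumption), the normalized weights are smooth since denominators are bounded away from zero, and $\psi_R$ can be taken to be a smooth (mollified) truncation if the cited theorem requires $C^2$ coefficients rather than merely Lipschitz ones. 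This step is essentially bookkeeping.

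Step (ii) is where the parameter $\kappa$ enters and is, I expect, the main obstacle. For fixed $x=(\mathbf{X}_i)_i$, the frozen equation decouples over $i$ into $N$ independent blocks, and within block $i$ the drift on $Y^j_{\mathbf{X}_i}$ is $-\lambda_2\,\psi_{R_2}\!\big(Y^j_{\mathbf{X}_i} - \kappa\,\mu_i\big)$ where $\mu_i := \sum_r Y^r_{\mathbf{X}_i}\,\varpi_\beta(\mathbf{X}_i,Y^r_{\mathbf{X}_i})/\sum_\ell \varpi_\beta(\mathbf{X}_i,Y^\ell_{\mathbf{X}_i})$ is a convex combination of the $Y^r_{\mathbf{X}_i}$'s. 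One computes $\tfrac{d}{dt}\mathbb{E}|\mathds{Y}^x_t|^2$ and needs the drift to point inward for large $|\mathds{Y}^x|$; the term $\langle Y^j, Y^j\rangle$ gives a coercive $-2\lambda_2|Y^j|^2$-type contribution (modulo the truncation, on the region where $\psi_{R_2}$ acts linearly) while the cross term $\kappa\langle \psi_{R_2}(\cdots), \mu_i\rangle$ is controlled using $|\mu_i|\le \max_r|Y^r_{\mathbf{X}_i}|\le |\mathbf{Y}_i|$; taking $\kappa\in(0,1)$ small makes the net Lyapunov drift strictly dissipative, uniformly in $x$ (the truncation $\psi_{R_2}$ makes the perturbation bounded, which only helps). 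This yields a Foster--Lyapunov drift condition $\mathcal{A}^x V \le -cV + C$ with $V(y)=1+|y|^2$ and constants $c,C$ independent of $x$, which together with uniform ellipticity gives geometric ergodicity and a unique $\Upsilon_x$ with uniformly bounded moments — precisely the recurrence condition flagged in \S\ref{sec: drift fast} and in the appendix.

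Finally, for step (iii) I would invoke the standard consequence (as in \cite{rockner2021diffusion}) that under uniform ellipticity, uniform dissipativity, and $C_b^{2+\theta}$ regularity of the frozen coefficients in both variables, the solution $u(x,\cdot)$ of the Poisson equation $\mathcal{L}^x_{\rm fast}\,u(x,y) = \overline{\mathscr{F}}(x) - \mathscr{F}(x,y)$ belongs to $C_b^{2+\theta}$ jointly and the averaged coefficients $\overline{\mathscr{F}}$, $\overline{\mathscr{G}}$ lie in $C_b^{2+\theta}(\mathbb{R}^{nN})$ (for $\overline{\mathscr{G}}$ one also uses that the averaged diffusion matrix stays uniformly positive definite, again thanks to the $\delta_1$-shift, so the matrix square root preserves regularity). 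The cited theorem then delivers the weak-error bound $\sup_{t\in[0,T]}|\mathbb{E}\varphi(\mathds{X}^\varepsilon_t)-\mathbb{E}\varphi(\hat{\mathds{X}}_t)|\le C_T\,\varepsilon^{\theta/2}$ for all test functions $\varphi\in C_b^{2+\theta}(\mathbb{R}^{nN})$, with $C_T$ depending on $T$, the ellipticity and dissipativity constants, the H\"older norms of the coefficients, and $\|\varphi\|_{C_b^{2+\theta}}$, but not on $\varepsilon$. I would present the Lyapunov computation of step (ii) in full (it is the only genuinely model-specific estimate) and keep steps (i) and (iii) at the level of verifying the hypothesis list of the cited theorem.
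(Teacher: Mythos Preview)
Your strategy matches the paper's: apply the averaging result of \cite{rockner2021diffusion} as a black box and verify its hypotheses, with the dissipativity of the frozen fast process being the only substantive computation. Two points to adjust.

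First, the Foster--Lyapunov form you write, $\mathcal{A}^x V\le -cV+C$ with $V(y)=1+|y|^2$, cannot hold here: the drift $\mathscr{B}$ is truncated via $\psi_{R_2}$ and hence bounded, so $2\langle y,\mathscr{B}(x,y)\rangle$ contributes at best linear, not quadratic, decay. The paper does not aim for a geometric Lyapunov condition but instead verifies the exact recurrence hypothesis $(A_b)$ of \cite{rockner2021diffusion}, namely $\lim_{|y|\to\infty}\sup_x\langle y,\mathscr{B}(x,y)\rangle=-\infty$. This is done by writing $\langle w,\psi_{R_2}(v)\rangle_\tau=w_\tau v_\tau\,\xi(v_\tau)$ with $\xi(s)=1\wedge R_2/|s|$, tracking the truncation through the factor $\underline{\xi}(\mathds{Y}):=\min_{i,j,\tau}\xi(\cdots)\in(0,1]$, and arriving at $\langle\mathds{Y},\mathscr{B}(x,\mathds{Y})\rangle\le-\lambda_2\big(\underline{\xi}(\mathds{Y})-\kappa\sqrt{M}\big)\|\mathds{Y}\|^2$; this is where the explicit smallness condition $\kappa<\underline{\xi}(\mathds{Y})/\sqrt{M}$ (hence ``$\kappa$ small'') comes from. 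Your bound $|\mu_i|\le|\mathbf{Y}_i|$ is the right ingredient for the cross term, but the main term needs the $\underline{\xi}$ bookkeeping rather than a quadratic Lyapunov.

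Second, your step (iii) is superfluous. The theorem in \cite{rockner2021diffusion} only requires the coefficients to lie in $C_b^{\theta,\delta}$ with $\theta,\delta\in(0,1)$ (plus the non-degeneracy and recurrence already checked); the Poisson-equation solvability and the regularity of $x\mapsto\Upsilon_x$ are consequences derived inside that reference, not hypotheses you need to verify. The paper accordingly stops after checking boundedness, uniform ellipticity of $\mathscr{H}\mathscr{H}^*$ and $\mathscr{G}\mathscr{G}^*$, the recurrence condition above, and local H\"older continuity of all four coefficients.
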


The proof of the theorem is postponed to Appendix \ref{app: proof conv}. 

\begin{rem}
    We can choose  
    $\theta \in (0,2]$, provided that we smoothen the truncation functions (used in \eqref{ms_CBO}) using mollifiers, as it is customary. In practice, this will not alter the numerical results. Yet we refrain from doing it in the present manuscript as it will make the presentation less readable. 
\end{rem}

\subsection{The algorithm}\label{sec: algo}
We propose an algorithm which approximates the computation of the effective (averaged) dynamics $\hat{\mathds{X}}_{\cdot}$ in \eqref{ave_model}. Without loss of generality, we assume the parameter $R_{1}$ of the truncation functions used in the drift $\mathscr{F}$ and in the diffusion $\mathscr{G}$ is sufficiently large, so that the truncation functions are in fact the identity. This will simplify the computation of the averaged quantities $\overline{\mathscr{F}}$ and $\overline{\mathscr{G}}$. Indeed, in this case, we have $\mathscr{F}(\mathds{X},\mathds{Y}) =  \big(F_{i}(\mathds{X},\mathbf{Y}_{i})\big)_{1\leq i \leq N}$ and $F_{i}(\mathds{X},\mathbf{Y}_{i}) = -\lambda_{1}\,  \left( \mathbf{X}_{i} -  \mathbf{z}( \mathds{X}, \kappa^{-1}\mathbf{v}^{i}) \right)$, 
where 
\begin{equation*}
    \mathbf{z}( \mathds{X}, \kappa^{-1}\mathbf{v}^{i}) = \sum\limits_{r=1}^{N} \mathbf{X}_{r} \frac{\omega_{\alpha}\left(\mathbf{X}_{r},\frac{1}{\kappa}\mathbf{v}^{i}\right)}{\sum\limits_{\ell=1}^{N}\omega_{\alpha}\left(\mathbf{X}_{\ell},\frac{1}{\kappa}\mathbf{v}^{i}\right)} \; \text{ and } \;
    \mathbf{v}^{i} = \sum\limits_{j=1}^{M} Y^{j}_{\mathbf{X}_{i}} \frac{\varpi_{\beta}(\mathbf{X}_{i},Y^{j}_{\mathbf{X}_{i}})}{\sum\limits_{\ell=1}^{M} \varpi_{\beta}(\mathbf{X}_{i},Y^{\ell}_{\mathbf{X}_{i}})}.
\end{equation*}
Its averaged analogue is obtained as $\overline{\mathscr{F}}(\mathds{X}) =  \big( \overline{F}_{i}(\mathds{X})\big)_{1\leq i \leq N}$ and
\begin{equation*}
    \overline{F}_{i}(\mathds{X}) = -\lambda_{1}\,\int  \left( \mathbf{X}_{i} -  \mathbf{z}( \mathds{X}, \kappa^{-1} \mathbf{v}^{i}) \right)\,\text{d}\Upsilon_{\mathds{X}}^{i} = -\lambda_{1} \left( \mathbf{X}_{i} - \int \mathbf{z}(\mathds{X}, \kappa^{-1} \mathbf{v}^{i})\, \text{d}\Upsilon_{\mathds{X}}^{i} \right),
\end{equation*}
where we recall $\mathbf{v}^{i} \equiv \mathbf{v}^{i}(\mathbf{X}_{i},\mathbf{Y}_{i})$, and  $\text{d}\Upsilon_{\mathds{X}}$ defined in the previous section is expressed as a product measure $\text{d}\Upsilon_{\mathds{X}}(y) = \bigotimes\limits_{i=1}^{N}\Upsilon_{\mathds{X}}^{i}(\text{d} y_{i})$ for $y\in \mathbb{R}^{mMN}$ and $y = (y_{i})_{1\leq i \leq N}$. Therefore, the goal is to estimate the \textit{averaged consensus} point given 
\begin{equation}\label{eq: averaged z}
    \overline{\mathbf{z}}_{i}(\mathds{X}) := \int_{\mathbb{R}^{mM}} \mathbf{z}(\mathds{X}, \kappa^{-1} \mathbf{v}^{i}(y_{i}))\, \Upsilon_{\mathds{X}}^{i}(\text{d} y_{i})
\end{equation}
for any given fixed vector $\mathds{X}$. 
This is done using the relationship  between the invariant measure and the averaged long-time behavior:
\begin{equation}\label{eq: time average}
    \int_{\mathbb{R}^{mM}} \mathbf{z}(\mathds{X}, \kappa^{-1} \mathbf{v}^{i}(\mathbf{X}_{i},y_{i}))\, \Upsilon_{\mathds{X}}^{i}(\text{d} y_{i}) = \lim\limits_{T\to +\infty} \frac{1}{T}\int_{0}^{T}  \mathbf{z}(\mathds{X},  \kappa^{-1} \mathbf{v}^{i}(\mathbf{X}_{i},\mathbf{Y}_{i,t}))\, \text{d} t,
\end{equation}
where $\mathbf{Y}_{i}$ solves \eqref{eq: frozen eq} with $x = \mathds{X} = (\mathbf{X}_{i})_{1\leq i \leq N}$ is a fixed parameter therein (constant in time). See for example \cite[equation (6.4.2)]{pavliotis2008multiscale}.

For the diffusion term, instead of computing $\overline{\mathscr{G}}$ as defined in the previous subsection, we shall approximate it with a diagonal matrix whose elements are the components of the vector
\begin{equation*}
    \sigma_{1}\left( \delta + \big| \mathbf{X}_{i} - \overline{\mathbf{z}}_{i}(\mathds{X})\big| \right).
\end{equation*}
Here, we use the notation $|v| = (|v_{i}|)_{1\leq i \leq d}$, for any vector $v\in \mathbb{R}^{d}$, and the constant $\delta$ is a vector of the same dimension whose elements are all equal to $\delta$.

Thus, at each iteration of the algorithm, we would like to retrieve a quantity that becomes closer to $\overline{\mathbf{z}}_{i}(\mathds{X})$ as defined in \eqref{eq: averaged z}, using the averaging in time as in the right-hand side of \eqref{eq: time average}. We proceed in a similar fashion as \cite[Algorithm 1]{chaudhari2019entropy}, using a Forward--Looking Averaging (instead of a linear averaging), that is: 
\begin{enumerate}[label=\textbf{FLA:},ref=FLA]
    \item \label{FLA} we compute $\overline{\mathbf{z}}_{i}(\mathds{X})$ at time step $t+1$  as a convex combination of its value at the previous step $t$, and the value of $\mathbf{z}( \mathds{X}, \kappa^{-1}\mathbf{v}^{i})$ at time step $t+1$; see the forward looking averaging (FLA) step in \texttt{Algorithm \ref{algo averaging}} below.
\end{enumerate}
The FLA method is comparable to the exponentially weighted moving average \cite[process $Z_t$ in page 510]{yeh2003multivariate}, which is an example of the more general class of \textit{moving average} methods as known in statistics.

\begin{rem}
Note that the computation of the invariant measure could be improved and/or realized differently, for example using the results in \cite{alrachid2019new, chassagneux2024computing}. In \cite{alrachid2019new}, the invariant measure is computed using a population of interacting particles. Using such approximation in our present setting would require the introduction of new interacting particles which would make the model less readable. In \cite{chassagneux2024computing}, the invariant measure is computed using a self-interacting particle. This approach is reminiscent to  \eqref{FLA}, since both of them use the past information of the particle, but compute the average with different weights. Ultimately, we have chosen \eqref{FLA} as it computes directly the desired averaged quantity, rather than the invariant measure (as in the mentioned references). 
\end{rem}

In the following, we present the pseudo-code for multiscale CBO solving the bi-level optimization problem (\ref{bilevel opt}).

\RestyleAlgo{ruled}
\SetKwComment{Comment}{/* }{ */}
\begin{algorithm}[htb]
\textbf{Set parameters:} $\gamma, \, \alpha, \, \beta, \, \lambda_1, \, \lambda_2, \, \sigma_1, \, \sigma_2, \, \Delta t, \, \Delta \tau, \, T_{x}, \, T_{y},\,R_1,\,R_2,\,\delta_1,\,\delta_2\,,\kappa\;$\;
\textbf{Initialize:} $\mathbf{z}^{i}_{0}=X_0^i \sim \rho_0, \;  i=1, \ldots, N\;$\;
\qquad\qquad\quad $Y^j_{i,0} \sim \mu_0, \; j=1, \ldots, M$, where $M\ll N$\;
\qquad\qquad\quad$t=0$, $\mathcal{T}_{x}=T_{x}/\Delta t$, $\mathcal{T}_y=T_y/\Delta \tau$\;
\While {$t\leq \mathcal{T}_{x}$ }{
\For{$i=1, \ldots, N$}{
$\tau=0$\;
$\mathbf{v}^i_{\tau} = \sum\limits_{j=1}^{M} Y^{j}_{X_{i},\tau} \frac{\varpi_{\beta}(X^{i}_{t}, Y^{j}_{X_{i},\tau})}{\sum\limits_{\ell=1}^{M}\varpi_{\beta}(X^{i}_{t},Y^{\ell}_{X_{i},\tau})}$ \Comment*[r]{Initial consensus of $\mathbf{Y}_{i,0}$}
\While{$\tau\leq \mathcal{T}_y$}{
\For{$j=1, \ldots, M$}{
\text{sample}  $W_{X_{i},\tau}^{Y,j}$ from $\mathcal{N}(0,\Delta \tau \, \mathds{I}_{m})$\;
$Y^{j}_{X_{i},\tau} \gets Y^{j}_{X_{i},\tau}- \lambda_2 \psi_{R_2}\!\left(Y^{j}_{X_{i},\tau}- \kappa\,\mathbf{v}^i_{\tau}\right)\Delta\tau + \sigma_2 D_{R_2}^{\delta_2}\!\left(Y^{j}_{X_{i},\tau}- \kappa\,\mathbf{v}^i_{\tau}\right) W_{X_{i},\tau}^{Y,j}$\;
}
$\mathbf{v}^i_{\tau} \gets \sum\limits_{j=1}^{M} Y^{j}_{X_{i},\tau} \frac{\varpi_{\beta}(X^{i}_{t}, Y^{j}_{X_{i},\tau})}{\sum\limits_{\ell=1}^{M}\varpi_{\beta}(X^{i}_{t},Y^{\ell}_{X_{i},\tau})}$ \Comment*[r]{consensus of $\mathbf{Y}_{i,\tau}$}
$\mathbf{z}' \gets \sum\limits_{k=1}^{N} X^{k}_{t} \frac{\omega_{\alpha}(X^{k}_{t},\frac{1}{\kappa}\mathbf{v}^i_{\tau})}{\sum\limits_{r=1}^{N}\omega_{\alpha}(X^{r}_{t},\frac{1}{\kappa}\mathbf{v}^i_{\tau})}$ \Comment*[r]{auxiliary variable}
$\mathbf{z}^{i}_{t}\gets (1-\gamma) \mathbf{z}^{i}_{t} + \gamma \mathbf{z}'$ \Comment*[r]{\ref{FLA} of $X^{i}_{t}$; depends on $i$}
$\tau \gets \tau +1$\;
}
\text{sample}  $W_{i,t}^{X}$ from $\mathcal{N}(0,\Delta t \,\mathds{I}_{n})$\;
$X^{i}_{t} \gets X^{i}_{t}-\lambda_{1}\psi_{R_1}\left(X_{t}^i-\mathbf{z}^{i}_{t}\right)\Delta t + \sigma_{1}D_{R_1}^{\delta_1}\left( X_{t}^i - \mathbf{z}^{i}_{t}\right)W_{i,t}^{X}$ \;
}
$t \gets t+1$\;
}
$(\bar{X}, \, \bar{\mathbf{v}}) \gets \left( \frac{1}{N}\sum_{i=1}^N X_{\mathcal{T}_x}^i ,\; \frac{1}{N}\sum_{i=1}^N \mathbf{v}_{\mathcal{T}_y}^i\right)$\;
$X^* \gets \sum\limits_{r=1}^{N} X_{\mathcal{T}_x}^r \frac{\omega_{\alpha}\left(X_{\mathcal{T}_x}^r, \frac{1}{\kappa}\bar{\mathbf{v}}\right)}{\sum\limits_{\ell=1}^{N}\omega_{\alpha}\left(X_{\mathcal{T}_x}^\ell, \frac{1}{\kappa}\bar{\mathbf{v}}\right)}$\;
$Y^* \gets \frac{1}{\kappa}\sum\limits_{r=1}^{N} \mathbf{v}_{\mathcal{T}_y}^r \frac{\varpi_{\beta}\left(\bar{X}, \, \frac{1}{\kappa}\mathbf{v}_{\mathcal{T}_y}^r\right)}{\sum\limits_{\ell=1}^{N}\varpi_{\beta}\left(\bar{X},\, \frac{1}{\kappa}\mathbf{v}_{\mathcal{T}_y}^\ell \right)}$\;
\textbf{Return} $(X^*, Y^*)$ \Comment*[r]{Approximation of solution to \eqref{bilevel opt}}
\caption{Multiscale CBO for bi-level optimization problems}
\label{algo averaging}
\end{algorithm}

In \texttt{Algorithm \ref{algo averaging}}, the weight function are defined according to the objective functions of upper and lower level optimization problems (see \eqref{weight beta}-\eqref{weight alpha}), i.e.,
\begin{equation*}
    \varpi_{\beta}(x,y) = \exp(-\beta G(x,y)) \quad \text{and} \quad \omega_{\alpha}(x,y) = \exp(-\alpha F(x,y)).
\end{equation*}

\begin{rem}
    It is important to note that for running \texttt{Algorithm \ref{algo averaging}}, we do not need to have as many $y$-particles as $x$-particles, in particular we have $M\ll N$. Moreover, it is not necessary to run the dynamics of the $y$-particles for a long time, that is, an approximate value of their consensus is already enough for the algorithm to perform well. Hence, we can choose $T_{y}\ll T_{x}$. 
    This can be justified by the fact that \textit{errors} in the computation of the consensus for the $y$-particles, can be tamed during the averaging procedure used for computing  the consensus of the $x$-particles. On the other hand, the \texttt{for} loop  can be computed in parallel, since the groups of particles $\mathbf{Y}_i=\{Y_{\mathbf{X}_{i}}^1,\cdots,Y_{\mathbf{X}_{i}}^M\}$  are independent from $\mathbf{Y}_k$ for any $k\neq i$.
    \label{rem: model reduction}
\end{rem}

\begin{rem}
\label{paramater_c}
    Although Theorem \ref{thm: conv} establishes the convergence of the model \eqref{ms_CBO} to an averaged dynamics as $\varepsilon\rightarrow 0$   under the assumption $\kappa\in (0,1)$ small,  numerical results suggest that this smallness assumption is not necessary for the convergence to hold. Indeed, in practice (see \texttt{Algorithm \ref{algo averaging}}), we implement an approximation of the averaging with respect to the invariant measure (see the discussion at the beginning of \Cref{sec: algo}). 
    In other words, we do not implement the \textit{effective} (averaged) dynamics which is the one obtained at the limit $\varepsilon\to 0$, because this would require an explicit formula for the invariant measure which is difficult to construct (analytically). To bypass this difficulty, what we implement instead is an approximation of this limiting behavior. This approximation is obtained with a finitely--small $\varepsilon>0$. Hence, there is no need to restrict the choice of the parameter $\kappa$ in practice.
\end{rem}

\section{Multiscale CBO for multi-level optimization}\label{sec: generalize}

\subsection{The general strategy}
\label{sec:multi}
The bi-level approach presented in the previous sections can be extended to tackle multi-level optimization problems of the form
\begin{equation}\label{multilevel opt}
\begin{aligned}
    & \min\limits_{x_{0}\in \mathbb{R}^{d_{0}}} \; F_{0}(x_{0},y_{1}, \dots, y_{n})\\
    &\quad \quad  \text{ s.t. } \; y_{1} \in \argmin\limits_{y\in \mathbb{R}^{d_{1}}}  F_{1}(x_{0},y,y_{2},\dots, y_{n}) \\
    &\quad \quad \quad \quad  \text{ s.t. }  \; y_{2} \in \argmin\limits_{y\in \mathbb{R}^{d_{2}}}  \; F_{2}(x_{0},y_{1},y, y_{3},\dots, y_{n}) \\
    &\quad \quad \quad \quad \quad \quad \quad \ddots \quad \quad \quad  \quad \ddots  \\
    &\quad \quad \quad \quad \quad \quad \quad \quad  \text{ s.t. } \; y_{n} \in \argmin\limits_{y\in \mathbb{R}^{d_{n}}}   \; F_{n}(x_{0},\dots, y_{n-1},y) 
\end{aligned}
\end{equation} 
In this case, the multiscale CBO model would have as many scales as the number of levels in \eqref{multilevel opt}. Its system of interacting SDEs is analogous to  \eqref{ms_CBO}, that is 
\begin{equation}
\left\{\;
\begin{aligned}
    &\text{d} \mathds{X}_{t} =  \mathscr{F}(\mathds{X}_t,\mathds{Y}^{1}_t, \dots, \mathds{Y}^{n}_t)\,\text{d} t + \mathscr{G}(\mathds{X}_t,\mathds{Y}^{1}_t, \dots, \mathds{Y}^{n}_t)\,\text{d} \mathds{W}^{0}_{t}\\
    &\text{d} \mathds{Y}^{1}_{t} = \frac{1}{\varepsilon}\mathscr{B}_{1}(\mathds{X}_t,\mathds{Y}^{1}_t, \dots, \mathds{Y}^{n}_t)\, \text{d} t + \frac{1}{\sqrt{\varepsilon}} \mathscr{H}_{1}  (\mathds{X}_t,\mathds{Y}^{1}_t, \dots, \mathds{Y}^{n}_t)\,\text{d} \mathds{W}^{1}_t\\
    & \quad \quad \; \vdots \\
    &\text{d} \mathds{Y}^{n}_{t} = \frac{1}{\varepsilon^{n}}\mathscr{B}_{n}(\mathds{X}_t,\mathds{Y}^{1}_t, \dots, \mathds{Y}^{n}_t)\, \text{d} t + \frac{1}{\sqrt{\varepsilon^{n}}} \mathscr{H}_{n}  (\mathds{X}_t,\mathds{Y}^{1}_t, \dots, \mathds{Y}^{n}_t)\,\text{d} \mathds{W}^{n}_t.
\end{aligned}
\right.
\end{equation}
The analysis can be performed in a cascade manner: First we consider $\mathds{Y}^{n}_{\cdot}$ as being the only \textit{fast} variable (while $\mathds{X}_{\cdot},\mathds{Y}^{1}_{\cdot}, \dots, \mathds{Y}^{n-1}_{\cdot}$ are frozen), this yields an averaged system made of the interacting SDEs $\hat{\mathds{X}}_{\cdot},\hat{\mathds{Y}}^{1}_{\cdot}, \dots, \hat{\mathds{Y}}^{n-1}_{\cdot}$ (these are analogously defined as in \eqref{ave_model}). Then we repeat the process considering $\hat{\mathds{Y}}^{n-1}_{\cdot}$ as being the only \textit{fast} variable (while $\hat{\mathds{X}}_{\cdot},\hat{\mathds{Y}}^{1}_{\cdot}, \dots, \hat{\mathds{Y}}^{n-2}_{\cdot}$ are frozen), and so on. 

\subsection{Tri-level optimization problems}

We consider the tri-level optimization problem (\ref{trilevel opt}) as an example of the multi-level problem. 
To generalize the algorithm in 
\Cref{sec: implement},  we consider $N$ particles $\{X^{i}_{\cdot}\}_{i=1}^{N}$ in $\mathbb{R}^{n}$, where $n,N$ are fixed integers. To each particle $X^{i}_{\cdot}\in \mathbb{R}^{n}$ with $1 \leq i \leq N$, we associate a set of $M$ particles $(Y^{1}_{X^{i},\cdot}, \dots, Y^{M}_{X^{i},\cdot}) \in \left(\mathbb{R}^{m}\right)^{M}$, where $Y_{X^{i},\cdot}^{j}\in\mathbb{R}^m$ for any $1\leq j \leq M$. 
Then, for every particle $Y^j_{X^{i},\cdot}\in \mathbb{R}^{m}$ with $1 \leq i \leq N$ and $1 \leq j \leq M$, we associate a set of $P$ particles $(R^{1}_{(i,j),\cdot}, \dots, R^{P}_{(i,j),\cdot}) \in \left(\mathbb{R}^{p}\right)^{P}$, where $R^k_{(i,j),\cdot}\in\mathbb{R}^p$ for any $1\leq k \leq P$. The subscript $(i,j)$ refers to the dependency on both $X^{i}$ and $Y^{j}_{X^{i}}$.
This corresponds to \texttt{Approach 1} in \Cref{A_1}.
\begin{figure}[h]
\centering
\begin{subfigure}[b]
  {0.52\textwidth}
\includegraphics[width=1.05\textwidth]
{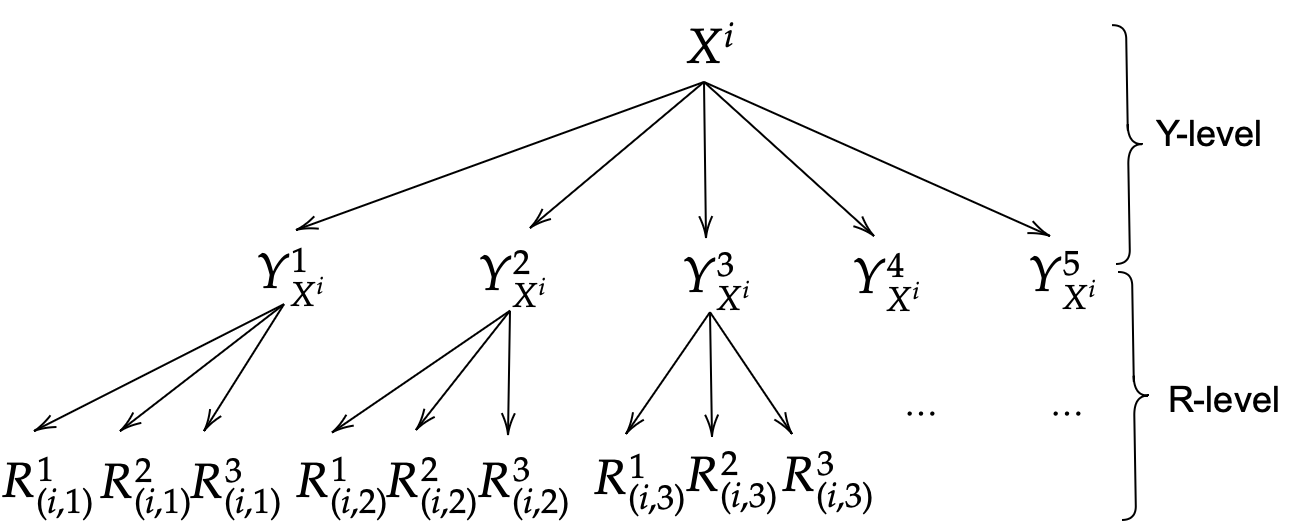}
\caption{\texttt{Approach 1}}
\end{subfigure}\quad\quad
\begin{subfigure}[b]
  {0.42\textwidth}
\includegraphics[width=1.05\textwidth]
{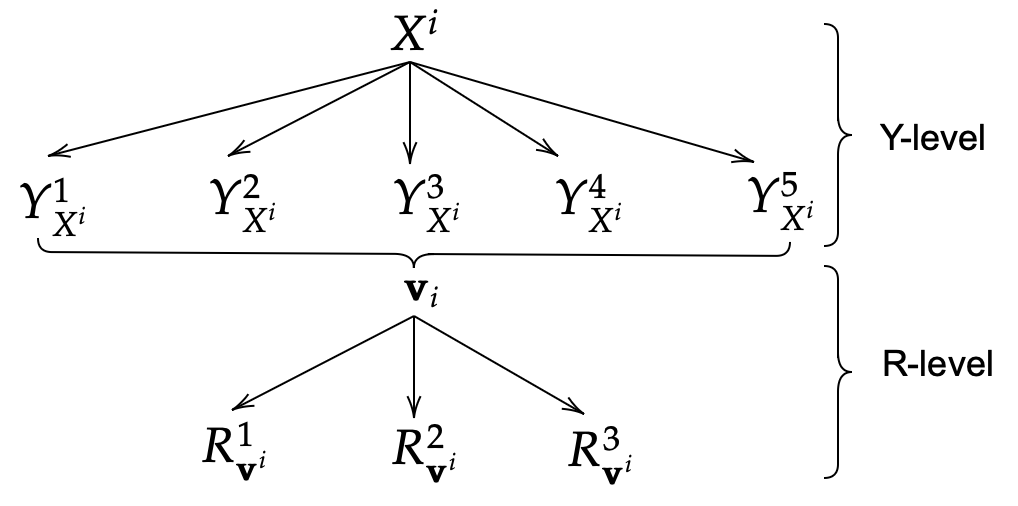}   
 \caption{\texttt{Approach 2}}
\end{subfigure}
\caption{Particle systems in tri-level optimization algorithm with $M=5$ particles for $Y$-system and $P=3$ particles for $R$-system. \texttt{Y-level} corresponds to the $Y$-particles solving the second level of optimization. In both approaches, they correspond to each fixed $X$-particle (here $X^{i}$). \texttt{R-level} corresponds to the $R$-particles solving the third level of optimization. In \texttt{Approach 1}, they correspond to each fixed $Y$-particle, whereas in \texttt{Approach 2}, they correspond to a representative (consensus) of the $Y$-particles.}
\label{A_1}
\end{figure}

However, in practice, this straightforward  generalization is computationally expensive, since we  need $N$ particles in $X$-system, $N\times M$ particles in $Y$-system and $N\times M\times P$ particles in $R$-system. Therefore, for tri-level optimization problems, we propose \texttt{Approach 2} as illustrated in \Cref{A_1}. In the latter, we shall consider $N$ particles $\{ X^{i}_{\cdot}\}_{i=1}^{N}$ in $\mathbb{R}^{n}$, and  $M$ particles $(Y^{1}_{X^{i},\cdot}, \dots, Y^{M}_{X^{i},\cdot}) \in \left(\mathbb{R}^{m}\right)^{M}$ associated to each particle $X^{i}_{\cdot}$ as for bi-level problems. Then, 
we compute the consensus point $\mathbf{v}^i$ of $\{Y^{j}_{X^{i},\cdot}\}_{j=1}^M$, for $1\leq i\leq N$, to which we associate a set of $P$ particles $\{R^{k}_{\mathbf{v}^i,\cdot}\}_{k=1}^{P}\in \left(\mathbb{R}^{p}\right)^{P}$. 
The dynamics now only have $N$ particles in $X$-system, $N\times M$ particles in $Y$-system and $N\times P$ particles in $R$-system. 

The pseudo-code corresponding to \texttt{Approach 2} in \Cref{A_1} is \texttt{Algorithm \ref{algo: tri-level}}, where the weight functions $\omega_{i}=\omega_{i}(x,y,r),i=1,2,3$, for some $\alpha_{1}, \alpha_{2}, \alpha_{3}>0$  are
\begin{equation*}
    \omega_{1} = \exp(-\alpha_1 F(x,y,r)),\; \omega_{2} = \exp(-\alpha_2 G(x,y,r)),\; \omega_{3} = \exp(-\alpha_3 E(x,y,r)).
\end{equation*}

Similar to \texttt{Algorithm \ref{algo averaging}} for bi-level problems, we can choose $M\ll N$, and $P\ll N$. Additionally, we set  
for simplicity $\kappa=1$. We also introduce the notation $\mathbf{D}\equiv \mathbb{D}_Q^\delta$ and $\Psi\equiv \psi_Q$, which are the truncation functions defined in \S\ref{sec:modified CBO}. Here, the truncation parameter (denoted therein by $R$) is now a fixed constant $Q>0$.  To gain more clarity in \texttt{Algorithm \ref{algo: tri-level}}, for $1\leq i \leq N$ and $t\geq 0$ we shall use the notation 
\begin{equation}\label{notation tri}
\begin{aligned}
    \mathbf{Y}_{i,t} \, := \; & \mathbf{Y}_{\mathbf{X}^{i},t} = (Y^{1}_{\mathbf{X}^{i},t},\dots,Y^{M}_{\mathbf{X}^{i},t}) \; \in (\mathbb{R}^{m})^M ,\\
    \mathbf{R}_{i,t} \, := \; & \mathbf{R}_{\mathbf{v}^i,t} = (R^{1}_{\mathbf{v}^i,t},\dots,R^{P}_{\mathbf{v}^i,t})\; \in (\mathbb{R}^{p})^P ,\\
    &  \text{ where } \;  \mathbf{v}^i \; \text{ is the consensus of } \; \{Y^{j}_{\mathbf{X}^{i},\cdot}\}_{j=1}^M \, \text{ in } \eqref{consensus v i}.
\end{aligned}
\end{equation}
As described in \S\ref{sec:multi}, we proceed in a cascade manner: We focus first on the lowest level (the $R$-particles) while keeping the other ($X,Y$)-particles fixed. We run the standard CBO algorithm for the $R$-particles. Then we move to the next higher level (the $Y$-particles). Here we must take the averaged consensus of the $Y$-particles with respect to the already computed consensus of the $R$-particles. At this stage, the highest level ($X$-particles) is still fixed, while the consensus term in the $Y$-particles is averaged using the \eqref{FLA} method so that it includes the effect of the  $R$-particles. Thus we are left with two populations: the $X$-particles which have remained fixed so far, and the $Y$-particles whose consensus terms have been averaged according to the (now, disappeared) $R$-particles. This concludes the first \textit{averaging} method, and reduces the setting from a $3$-scale to $2$-scale dynamics. We can then continue as in \texttt{Algorithm \ref{algo averaging}.} This procedure is highlighted in \texttt{Algorithm \ref{algo: tri-level words}} where the green lines correspond to what is different from \texttt{Algorithm \ref{algo averaging}}. This is  expressed with symbols in \texttt{Algorithm \ref{algo: tri-level}}.

\begin{algorithm}[H]
 \textbf{Set parameters:} $\gamma, \, \alpha_1,\, \alpha_2,\, \alpha_3, \, \lambda,\, \sigma, \, \Delta t, \, T_{x}, \, T_{y},\, T_{r},\,\delta$,\, $\kappa=1$ \;
 \textbf{Initialize:} $\mathbf{z}^{i}_{0}=X_0^i \sim \rho_0, \;  i=1, \ldots, N$;  $Y^j_{i,0} \sim \mu_0, \; j=1, \ldots, M$;
 $R^k_{i,0} \sim \gamma_0, \; k=1, \ldots, P$; 
$t=0$, $\mathcal{T}_{x}=T_{x}/\Delta t$, $\mathcal{T}_y=T_y/\Delta t$, $\mathcal{T}_r=T_r/\Delta t$ \;
\While{$t\leq \mathcal{T}_{x}$}{
\For {$i=1, \ldots, N$}{
Initialize time index $(\tau=0)$ for the $Y$-particles\;
 Initialize consensus point $\mathbf{v}_{0}^i$ of $\mathbf{Y}_{i,0}$ \;
\While{ $\tau\leq \mathcal{T}_y$}{
 \begin{thisnote}
 Initialize time index $(s=0)$ for the $R$-particles\;
 Initialize consensus point $\mathbf{r}_{0}^i$ of $\mathbf{R}_{i,0}$ \;
\While{ $s\leq \mathcal{T}_r$}{ 
Run CBO for the $R$-particles\;
Compute consensus point $\mathbf{r}_{s}^i$ of of $\mathbf{R}_{i,s}$\;
Update the \textit{averaged} consensus of the $Y$-particles (using \eqref{FLA})\;
Update time index of $R$-particles ($s \gets s+1$)\;
}
 \end{thisnote}
(Note that $s = \mathcal{T}_{r}$ here)\;
Run CBO for the $Y$-particles\;
Compute consensus point $\mathbf{v}_{s}^i$ of  $\mathbf{Y}_{i,s}$\;
Update the \textit{averaged} consensus of the $X$-particles (using \eqref{FLA})\;
Update time index of $Y$-particles ($\tau \gets \tau+1$)\;
}
}
Run CBO for the $X$-particles\;
Update time index of $X$-particles ($t \gets t+1$)\;
}
Compute the arithmetic mean of $(X_{\mathcal{T}_x}^i)_{i=1}^{N}$, $(\mathbf{v}_{\mathcal{T}_y}^i)_{i=1}^{N}$, and $(\mathbf{r}_{\mathcal{T}_r}^i)_{i=1}^{N}$\;
(The latter are needed to compute the weights in the consensus next)\;
Compute the consensus points of $(X^*, Y^*, R^*)$\;
\textbf{Return} $(X^{*}, Y^{*}, R^{*})$  \Comment*[r]{Approximation of solution to \eqref{trilevel opt}}
\caption{Multiscale CBO for tri-level optimization -- in words.}
\label{algo: tri-level words}
\end{algorithm}

Recalling the notation in \eqref{notation tri}, the algorithm for tri-level optimization problem can be expressed as follows. 

\begin{algorithm}[H]
 \textbf{Set parameters:} $\gamma, \, \alpha_1,\, \alpha_2,\, \alpha_3, \, \lambda,\, \sigma, \, \Delta t, \, T_{x}, \, T_{y},\, T_{r},\,\delta$,\, $\kappa=1$ \;
 \textbf{Initialize:} $\mathbf{z}^{i}_{0}=X_0^i \sim \rho_0, \;  i=1, \ldots, N$;  $Y^j_{i,0} \sim \mu_0, \; j=1, \ldots, M$;
 $R^k_{i,0} \sim \gamma_0, \; k=1, \ldots, P$; 
$t=0$, $\mathcal{T}_{x}=T_{x}/\Delta t$, $\mathcal{T}_y=T_y/\Delta t$, $\mathcal{T}_r=T_r/\Delta t$ \;
\While{$t\leq \mathcal{T}_{x}$}{
\For {$i=1, \ldots, N$}{
$\tau=0$ \;
$\mathbf{v}_{\tau}^i = \sum\limits_{j=1}^{M} Y^{j}_{i,\tau} \frac{\omega_{2}(X_t^{i}, Y^{j}_{i,\tau},\frac{1}{P}\sum_{k=1}^P R_{i,0}^k)}{\sum\limits_{\ell=1}^{M}\omega_{2}(X_t^{i},Y^{\ell}_{i,\tau},\frac{1}{P}\sum_{k=1}^P R_{i,0}^k)}$ \Comment*[r]{Consensus of $\mathbf{Y}_{i,0}$}
\While{ $\tau\leq \mathcal{T}_y$}{
 \begin{thisnote}
 $s=0$ \;
$\mathbf{r}^i_{s} \gets \sum\limits_{k=1}^{P} R^{k}_{i,s} \frac{\omega_{3}(X_t^{i}, \mathbf{v}^i_{\tau},R^{k}_{i,s})}{\sum\limits_{\ell=1}^{M}\omega_{3}(X^{i}_t,\mathbf{v}^M_{i,\tau},R^{\ell}_{i,s})}$ \Comment*[r]{Consensus of $\mathbf{R}_{i,0}$}
\While{ $s\leq \mathcal{T}_r$}{
$\mathbf{R}_{i,s} \gets \mathbf{R}_{i,s}- \lambda\;\Psi\left(\mathbf{R}_{i,s}-\mathbf{r}^i_{s}\right)\Delta t+\sigma \mathbf{D}\left(\mathbf{R}_{i,s}-\mathbf{r}^i_{s}\right)W_{i,s}^{\mathbf{R}}$\;
$\mathbf{r}^i_{s} \gets \sum\limits_{k=1}^{P} R^{k}_{i,s} \frac{\omega_{3}(X_t^{i}, \mathbf{v}^i_{\tau},R^{k}_{i,s})}{\sum\limits_{\ell=1}^{P}\omega_{3}(X^{i}_t,\mathbf{v}^i_{\tau},R^{\ell}_{i,s})}$ \Comment*[r]{consensus of $\mathbf{R}_{i,s}$}
$\mathbf{v}' \gets \sum\limits_{j=1}^{M} Y^{j}_{i,\tau} \frac{\omega_{2}(X^{i}_t, Y^{j}_{i,\tau},\mathbf{r}_{s}^i)}{\sum\limits_{\ell=1}^{M}\omega_{2}(X^{i}_t,Y^{\ell}_{i,\tau},\mathbf{r}_{s}^i)}$ \;
 $\mathbf{v}^{i}_\tau \gets (1-\gamma) \mathbf{v}^{i}_\tau + \gamma \mathbf{v}'$ \Comment*[r]{\ref{FLA} of $\mathbf{Y}_{i,\tau}$}
$s \gets s+1$\;
}
 \end{thisnote}
 $\mathbf{Y}_{i,\tau} \gets \mathbf{Y}_{i,\tau}- \lambda\;\Psi\left(\mathbf{Y}_{i,\tau}-\mathbf{v}^{i}_\tau\right)\Delta t+\sigma \mathbf{D}\left(\mathbf{Y}_{i,\tau}-\mathbf{v}^{i}_\tau\right)W_{i,\tau}^{\mathbf{Y}}$ \;
$\mathbf{v}^i_{\tau} = \sum\limits_{j=1}^{M} Y^{j}_{i,\tau} \frac{\omega_{2}(X_t^{i}, Y^{j}_{i,\tau},\mathbf{r}_{s}^i)}{\sum\limits_{\ell=1}^{M}\omega_{2}(X_t^{i},Y^{\ell}_{i,\tau},\mathbf{r}_{s}^i)}$ \Comment*[r]{ Note that $s = \mathcal{T}_{r}$}
$\mathbf{z}' \gets \sum\limits_{q=1}^{N} X^{q}_t \frac{\omega_{1}(X^{q}_t,\mathbf{v}_{\tau}^i,\mathbf{r}_{s}^i)}{\sum\limits_{r=1}^{N}\omega_{1}(X_t^{r},\mathbf{v}_{\tau}^i,\mathbf{r}_{s}^i)}$ \;
$\mathbf{z}_t^{i}\gets (1-\gamma) \mathbf{z}_t^{i} + \gamma \mathbf{z}'$ \Comment*[r]{\ref{FLA} of $X^{i}_t$; it depends on $i$}
$\tau \gets \tau+1$
}
}
$\mathbf{X}^{i}_t \gets \mathbf{X}^{i}_t-\lambda\Psi\left(\mathbf{X}^i_t-\mathbf{z}^{i}_t\right)\Delta t + \sigma \mathbf{D}\left(\mathbf{X}^i_t - \mathbf{z}_t^{i}\right)W^{\mathbf{X}}_{i,t}$ \;
$t \gets t+1$ \;
}
$(\bar{X},\, \bar{\mathbf{v}},\, \bar{\mathbf{r}}) \gets \left(  \frac{1}{N}\sum_{i=1}^N X_{\mathcal{T}_x}^i,\; \frac{1}{N}\sum_{i=1}^N \mathbf{v}_{\mathcal{T}_y}^i,\; \frac{1}{N}\sum_{i=1}^N \mathbf{r}_{\mathcal{T}_r}^i\right)$\;
$X^* \gets \sum\limits_{k=1}^{N} X_{\mathcal{T}_x}^k\frac{\omega_{1}(X_{\mathcal{T}_x}^k, \bar{\mathbf{v}}, \bar{\mathbf{r}})}{\sum\limits_{\ell=1}^{N}\omega_{1}(X_{\mathcal{T}_x}^\ell,\bar{\mathbf{v}},\bar{\mathbf{r}})}$\;
$Y^* \gets \sum\limits_{k=1}^{N} \mathbf{v}_{\mathcal{T}_y}^k\frac{\omega_{2}(\bar{X}, \mathbf{v}_{\mathcal{T}_y}^k,\bar{\mathbf{r}})}{\sum\limits_{\ell=1}^{N}\omega_{2}(\bar{X},\mathbf{v}_{\mathcal{T}_y}^\ell,\bar{\mathbf{r}})}$\;
$R^* \gets \sum\limits_{k=1}^{N} \mathbf{r}_{\mathcal{T}_r}^k\frac{\omega_{3}(\bar{X},  \bar{\mathbf{v}},\mathbf{r}_{\mathcal{T}_r}^k)}{\sum\limits_{\ell=1}^{N}\omega_{3}(\bar{X},\bar{\mathbf{v}},\mathbf{r}_{\mathcal{T}_r}^\ell)}$\;
\textbf{Return} $(X^{*}, Y^{*}, R^{*})$  \Comment*[r]{Approximation of solution to \eqref{trilevel opt}}
\caption{Multiscale CBO for tri-level optimization.}
\label{algo: tri-level}
\end{algorithm}

\section{Numerical experiments}\label{sec: test}

In this section, we first assess the performance of the multiscale CBO algorithm for bi-level and tri-level optimization problems. Then, we compare the performance of multiscale CBO methods with the CBO methods proposed by  \cite{huang2024consensus}  in solving min-max optimization problems. Lastly, we conduct a sensitivity analysis of the \texttt{MS-CBO} parameters specifically for bi-level optimization problems. We shall construct multi-level optimization problems using the following  benchmark functions:
\begin{enumerate}[label=\textcolor{blue}{\textbf{(\arabic*)}}]
    \item  \textbf{Ackley function \cite{Ackley}:}
 \begin{equation*}
        A(x):= -20 \exp \left(-0.2 \sqrt{\frac{1}{d} \sum_{i=1}^{d} x_i^2}\right)-\exp \left(\frac{1}{d} \sum_{i=1}^{d} \cos \left(2\pi x_i\right)\right)+\exp(1)+20.
    \end{equation*}
     \item  \textbf{Rastrigin function \cite{rastrigin1974systems}:}
    \begin{equation*}
        R(x):=\sum_{i=1}^{d} x_i^2+1.5(1- \cos \left(2 \pi x_i\right)).
    \end{equation*}
     \item  \textbf{Levy function:}
    \begin{equation*}
        \begin{aligned}
            L(x):=
            & \sin ^2\left(\pi \left(1+\frac{x_1}{4}\right)\right)+\sum_{i=1}^{d-1}\left(\frac{x_i}{4}\right)^2\left[1+10 \sin ^2\left(\pi \left(1+\frac{x_i}{4}\right)
            +1\right)\right]\\
            &\quad \quad \quad  +\left(\frac{x_d}{4}
            \right)^2\left[1+\sin ^2\left(2 \pi \left(1+\frac{x_i}{4}\right)
            \right)\right],\qquad d\geq2.
        \end{aligned}
    \end{equation*}
    \end{enumerate}
All the benchmarks have global minimizer $x^* =(0,\cdots,0)$. In the following, we perform 100 Monte Carlo simulations to compute the success rate of the multiscale CBO algorithm and expectation of the error, 
which is defined as
\[
\texttt{error} := \|X^*-x^*\|_2+\|Y^*-y^*\|_2
\]
with output $(X^*,Y^*)$ for bi-level and min-max optimization algorithms, $(x^*,y^*)$ is the true solution. For tri-level optimization problem,  given the output $(X^*,Y^*,R^*)$ of \texttt{Algorithm \ref{algo: tri-level}}, the error is defined analogously as 
\[
\texttt{error} := \|X^*-x^*\|_2+\|Y^*-y^*\|_2+\|R^*-r^*\|_2,
\]
where $(x^*,y^*,r^*)$ is the true solution of the tri-level optimization problem. We consider a run is successful if 
\begin{equation}\label{test}
    \texttt{error}\leq 0.25,
\end{equation}
following what has previously been done in \cite{kalise2023consensus,pinnau2017consensus}. All numerical experiments were implemented using \textsc{Matlab R2024b} on a MacBook Pro equipped with an Apple M4 chip and 16 GB of RAM.  \textsc{Matlab} codes for \texttt{MS-CBO} and the numerical tests presented in this paper are available in the GitHub repository: \\
\url{https://github.com/AmberYuyangHuang/Multi-scale-CBO}

\subsection{Bi-level optimization}
\label{sec:bilevel}
In this section, we consider the bi-level optimization problem (\ref{bilevel opt}). The particle dynamics are discretized using the Euler–Maruyama scheme. The parameters are chosen as
\begin{equation}
\label{paramaters}
\begin{aligned}
    &\alpha=\beta=10^{15},\quad T_x=50, \quad T_y=0.5,\quad \Delta t=\Delta \tau=0.1\\
    & N=100,\; M=25,\quad\lambda_1=\lambda_2=1 ,\quad \sigma_1=\sigma_2=2,\quad \gamma=0.75,\\
    & \delta_1=\delta_2=10^{-5}, \quad R_1=R_2=10, \quad \kappa=1, \quad n=m=d
\end{aligned}
\end{equation}
The initial positions of $x$-particles and  $y$-particles are sampled from  $\rho_{0}=\mathcal{U}[-1,3]^d$ and $\mu_{0}=\mathcal{U}[-1,3]^d$,  respectively.  

\begin{rem}
    Let us comment on the values in \eqref{paramaters} for the choice of parameters. 
    \begin{enumerate}[label = (\arabic*)]
        \item Although theoretical conditions in literature to establish convergence results  for single-level anisotropic CBO methods suggest choosing $2\lambda>\sigma^2$ when $\kappa=1$ in \cite{fornasier2022anisotropic}, or $\lambda> 16 \sigma^2$ when $\kappa<1$ in \cite[Remark 2.6]{huang2025faithful},  it appears that in practice, one still need $\sigma$ to be large enough. 
        This is mainly due to the difference between the mean--field setting in which the convergence analysis is usually performed, and the particle system which is implemented in practice.  Additionally, the hierarchical structure in multilevel optimization problems further requires increased exploration, motivating the practical choice of larger values for $\sigma$. The need for stronger diffusion has also been observed numerically in related work about saddle point problems \cite{huang2024consensus} and min-max problems \cite{borghi2024particle}. 
        \item As we have already mentioned in Remark \ref{paramater_c}, the parameter $\kappa$ can be set to $1$ in practice, since we do not implement the average system \eqref{ave_model} that is obtained at the limit $\varepsilon\to 0$ (and which require $\kappa<1$). What we do implement instead is an approximation of the averaging procedure which in practice uses \eqref{ms_CBO}, that is for $\varepsilon$ small but positive. 
        \item The choice of $\gamma$ is rather based on empirical observations. It appears that the current consensus point has more accurate information than the previous seemingly because it was computed based on the previous knowledge of the position of the particles. Hence we attribute to the current consensus point a larger weight ($\gamma=0.75$) than to the previous one $(1-\gamma)$ when computing the averaged consensus with the \eqref{FLA} method. This approach is motivated by the more general class of \textit{moving average} methods as known in statistics. See e.g.  \cite{yeh2003multivariate}. 
        \item The choice of the parameter $\alpha$ is motivated by Laplace principle: Letting $\alpha$ and $\beta$ go to infinity enhances the concentration properties of the probability density of the process, as it is analogous to Gibbs measure (where $\alpha,\beta$ would play the role of inverse temperature). Thus we fix it at a very large value to approximate the asymptotic regime (of low temperatures). In practice, the term evolving $\alpha$ and $\beta$ are computed after a shift such as $\exp (-\alpha(F(x, y)-\min_x F))$ and $\exp (-\beta(G(x, y)-\min_y G))$ which better stabilizes the computations.
    \end{enumerate}
     \label{rem:paramaters}
\end{rem}

In \Cref{tb:bilevel} we display the numerical results obtained for bi-level optimization problems, \texttt{MS-CBO} refers to the multiscale CBO algorithm we propose, where \texttt{MS} stands for \textit{multiscale}. The solutions for all the tests are at origin, except for the test (\textcolor{blue}{\texttt{ii}}) whose solution is $x^*=y^*=(1,\cdots,1)$.

\begin{table}[h]
\footnotesize
\begin{tabular}{|ll|ll|}
\hline
\multicolumn{2}{|l|}{\qquad\qquad\qquad\qquad\qquad \textcolor{blue}{$d=10$}}                                                                                                                                                                                                 & \multicolumn{2}{l|}{\qquad\quad\textcolor{blue}{\qquad\texttt{MS-CBO}}}                                           \\ 
\hline
\hline
\multicolumn{1}{|l|}{\multirow{6}{*}{\textcolor{blue}{\textbf{(i)}}}}   & \multirow{6}{*}{\begin{tabular}[c]{@{}l@{}}$F(x,y)=\sum_{i=1}^{d} x_i^2+ y_i^2$\\ $G(x,y)=\sum_{i=1}^{d}(x_i-y_i)^2$\end{tabular}}                 & \multicolumn{1}{l|}{\multirow{2}{*}{success rate}}                 & \multirow{2}{*}{100\%}                 \\
\multicolumn{1}{|l|}{}                                                  &                                                                                                                                                    & \multicolumn{1}{l|}{}                                              &                                        \\ \cline{3-4} 
\multicolumn{1}{|l|}{}                                                  &                                                                                                                                                    & \multicolumn{1}{l|}{\multirow{2}{*}{$\mathbb{E}[\texttt{error}]$}} & \multirow{2}{*}{1.394$\times 10^{-4}$} \\
\multicolumn{1}{|l|}{}                                                  &                                                                                                                                                    & \multicolumn{1}{l|}{}                                              &                                        \\ \cline{3-4} 
\multicolumn{1}{|l|}{}                                                  &                                                                                                                                                    & \multicolumn{1}{l|}{\multirow{2}{*}{running time (s)}}             & \multirow{2}{*}{1.625}                \\
\multicolumn{1}{|l|}{}                                                  &                                                                                                                                                    & \multicolumn{1}{l|}{}                                              &                                        \\ \hline
\hline
\multicolumn{1}{|l|}{\multirow{6}{*}{\textcolor{blue}{\textbf{(ii)}}}}  & \multirow{6}{*}{\begin{tabular}[c]{@{}l@{}}$F(x,y)=\sum_{i=1}^{d} (x_i-1)^2+(y_i-1)^2$\\ $G(x,y)=\sum_{i=1}^{d}(x_i-y_i)^2$\end{tabular}}          & \multicolumn{1}{l|}{\multirow{2}{*}{success rate}}                 & \multirow{2}{*}{100\%}                 \\
\multicolumn{1}{|l|}{}                                                  &                                                                                                                                                    & \multicolumn{1}{l|}{}                                              &                                        \\ \cline{3-4} 
\multicolumn{1}{|l|}{}                                                  &                                                                                                                                                    & \multicolumn{1}{l|}{\multirow{2}{*}{$\mathbb{E}[\texttt{error}]$}} & \multirow{2}{*}{1.353$\times 10^{-4}$} \\
\multicolumn{1}{|l|}{}                                                  &                                                                                                                                                    & \multicolumn{1}{l|}{}                                              &                                        \\ \cline{3-4} 
\multicolumn{1}{|l|}{}                                                  &                                                                                                                                                    & \multicolumn{1}{l|}{\multirow{2}{*}{running time (s)}}             & \multirow{2}{*}{1.616}                \\
\multicolumn{1}{|l|}{}                                                  &                                                                                                                                                    & \multicolumn{1}{l|}{}                                              &                                        \\ \hline
\hline
\multicolumn{1}{|l|}{\multirow{6}{*}{\textcolor{blue}{\textbf{(iii)}}}} & \multirow{6}{*}{\begin{tabular}[c]{@{}l@{}}$F(x,y)=\sum_{i=1}^{d} x_i^2+ y_i^2+2x_i y_i$\\ $G(x,y)=\sum_{i=1}^{d}(x_i-y_i)^2$\end{tabular}} & \multicolumn{1}{l|}{\multirow{2}{*}{success rate}}                 & \multirow{2}{*}{100\%}                 \\
\multicolumn{1}{|l|}{}                                                  &                                                                                                                                                    & \multicolumn{1}{l|}{}                                              &                                        \\ \cline{3-4} 
\multicolumn{1}{|l|}{}                                                  &                                                                                                                                                    & \multicolumn{1}{l|}{\multirow{2}{*}{$\mathbb{E}[\texttt{error}]$}} & \multirow{2}{*}{1.425$\times 10^{-3}$} \\
\multicolumn{1}{|l|}{}                                                  &                                                                                                                                                    & \multicolumn{1}{l|}{}                                              &                                        \\ \cline{3-4} 
\multicolumn{1}{|l|}{}                                                  &                                                                                                                                                    & \multicolumn{1}{l|}{\multirow{2}{*}{running time (s)}}             & \multirow{2}{*}{1.890}                \\
\multicolumn{1}{|l|}{}                                                  &                                                                                                                                                    & \multicolumn{1}{l|}{}                                              &                                        \\ \hline
\hline
\multicolumn{1}{|l|}{\multirow{6}{*}{\textcolor{blue}{\textbf{(iv)}}}}  & \multirow{6}{*}{\begin{tabular}[c]{@{}l@{}}$F(x,y)=A(x)+A(y)$\\ $G(x,y)=\sum_{i=1}^{d}(x_i-y_i)^2$\end{tabular}}                                   & \multicolumn{1}{l|}{\multirow{2}{*}{success rate}}                 & \multirow{2}{*}{100\%}                 \\
\multicolumn{1}{|l|}{}                                                  &                                                                                                                                                    & \multicolumn{1}{l|}{}                                              &                                        \\ \cline{3-4} 
\multicolumn{1}{|l|}{}                                                  &                                                                                                                                                    & \multicolumn{1}{l|}{\multirow{2}{*}{$\mathbb{E}[\texttt{error}]$}} & \multirow{2}{*}{1.333$\times 10^{-4}$} \\
\multicolumn{1}{|l|}{}                                                  &                                                                                                                                                    & \multicolumn{1}{l|}{}                                              &                                        \\ \cline{3-4} 
\multicolumn{1}{|l|}{}                                                  &                                                                                                                                                    & \multicolumn{1}{l|}{\multirow{2}{*}{running time (s)}}             & \multirow{2}{*}{2.682}                \\
\multicolumn{1}{|l|}{}                                                  &                                                                                                                                                    & \multicolumn{1}{l|}{}                                              &                                        \\ \hline
\hline
\multicolumn{1}{|l|}{\multirow{6}{*}{\textcolor{blue}{\textbf{(v)}}}}   & \multirow{6}{*}{\begin{tabular}[c]{@{}l@{}}$F(x,y)=R(x)+R(y)+ 2\sum_{i=1}^dx_iy_i$\\ $G(x,y)=A(x-y)$\end{tabular}}                                                      & \multicolumn{1}{l|}{\multirow{2}{*}{success rate}}                 & \multirow{2}{*}{100\%}                  \\
\multicolumn{1}{|l|}{}                                                  &                                                                                                                                                    & \multicolumn{1}{l|}{}                                              &                                        \\ \cline{3-4} 
\multicolumn{1}{|l|}{}                                                  &                                                                                                                                                    & \multicolumn{1}{l|}{\multirow{2}{*}{$\mathbb{E}[\texttt{error}]$}} & \multirow{2}{*}{4.311$\times 10^{-3}$} \\
\multicolumn{1}{|l|}{}                                                  &                                                                                                                                                    & \multicolumn{1}{l|}{}                                              &                                        \\ \cline{3-4} 
\multicolumn{1}{|l|}{}                                                  &                                                                                                                                                    & \multicolumn{1}{l|}{\multirow{2}{*}{running time (s)}}             & \multirow{2}{*}{3.379}                \\
\multicolumn{1}{|l|}{}                                                  &                                                                                                                                                    & \multicolumn{1}{l|}{}                                              &                                        \\ \hline
\hline
\multicolumn{1}{|l|}{\multirow{6}{*}{\textcolor{blue}{\textbf{(vi)}}}}  & \multirow{6}{*}{\begin{tabular}[c]{@{}l@{}}$F(x,y)=L(x)+L(y)$\\ $G(x,y)=A(x-y)$\end{tabular}}                                                      & \multicolumn{1}{l|}{\multirow{2}{*}{success rate}}                 & \multirow{2}{*}{100\%}                 \\
\multicolumn{1}{|l|}{}                                                  &                                                                                                                                                    & \multicolumn{1}{l|}{}                                              &                                        \\ \cline{3-4} 
\multicolumn{1}{|l|}{}                                                  &                                                                                                                                                    & \multicolumn{1}{l|}{\multirow{2}{*}{$\mathbb{E}[\texttt{error}]$}} & \multirow{2}{*}{1.519$\times 10^{-4}$} \\
\multicolumn{1}{|l|}{}                                                  &                                                                                                                                                    & \multicolumn{1}{l|}{}                                              &                                        \\ \cline{3-4} 
\multicolumn{1}{|l|}{}                                                  &                                                                                                                                                    & \multicolumn{1}{l|}{\multirow{2}{*}{running time (s)}}             & \multirow{2}{*}{4.396}                \\
\multicolumn{1}{|l|}{}                                                  &                                                                                                                                                    & \multicolumn{1}{l|}{}                                              &                                        \\ \hline
\end{tabular}
\captionof{table}{The numerical results
obtained with \texttt{Algorithm \ref{algo averaging}} for bi-level optimization problems \eqref{bilevel opt}, where $n=m=d=10$.\label{tb:bilevel}}
\end{table}
We observe that  \texttt{Algorithm \ref{algo averaging}} demonstrates satisfactory accuracy in solving  bi-level optimization problems, more importantly when the objective functions are non-convex with multiple local minimizers.

\subsection{Tri-level optimization}\label{sec: tri-level opt}
In this section, we consider the tri-level optimization problem \eqref{trilevel opt} with $n=m=p=d$. 
The particles are sampled initially from $\rho_{0}=\mu_{0}=\gamma_0=\mathcal{U}[-1,3]^d$, and the parameters are chosen as
\[
\alpha_1=\alpha_2=\alpha_3=10^{15},\quad  T_x=50, \quad T_y=T_z=0.5,\quad \Delta t=0.1,\quad \kappa=1
\]
\[
N=100, \quad M=50, \quad P=25,\quad \lambda=1 ,\quad \sigma=2,\quad  \gamma=0.75,\quad \delta=10^{-5}, \quad Q=10.
\]
The numerical results obtained for tri-level optimization problems are shown in \Cref{tb:trilevel}. The optimal solution for \textcolor{blue}{\textbf{Test (A)}} and \textcolor{blue}{\textbf{Test (B)}} is $x^*=y^*=r^*=(0,\cdots,0)$ and the optimal solution for \textcolor{blue}{\textbf{Test (C)}} is $x^*=y^*=r^*=(1,\cdots,1)$. 
\begin{table}[!h]
\begin{tabular}{|lll|}
\hline
\multicolumn{3}{|l|}{\textcolor{blue}{\textbf{Test (A)}}}                                                                                                                                                                                                                                                                   \\ \hline
\multicolumn{1}{|l|}{\multirow{6}{*}{\begin{tabular}[c]{@{}l@{}}$F(x,y,r)=\sum_{i=1}^dx_i^2+y_i^2$\\ $G(x,y,r)=L(x-y)$\\ $E(x,y,r)=L(r-y)$\end{tabular}}}                                                     & \multicolumn{1}{l|}{\multirow{2}{*}{success rate}}                 & \multirow{2}{*}{100\%}                 \\
\multicolumn{1}{|l|}{}                                                                                                                                                                                        & \multicolumn{1}{l|}{}                                              &                                        \\ \cline{2-3} 
\multicolumn{1}{|l|}{}                                                                                                                                                                                        & \multicolumn{1}{l|}{\multirow{2}{*}{$\mathbb{E}[\texttt{error}]$}} & \multirow{2}{*}{$1.181\times 10^{-3}$} \\
\multicolumn{1}{|l|}{}                                                                                                                                                                                        & \multicolumn{1}{l|}{}                                              &                                        \\ \cline{2-3} 
\multicolumn{1}{|l|}{}                                                                                                                                                                                        & \multicolumn{1}{l|}{\multirow{2}{*}{running time (s)}}             & \multirow{2}{*}{20.352}                \\
\multicolumn{1}{|l|}{}                                                                                                                                                                                        & \multicolumn{1}{l|}{}                                              &                                        \\ \hline\hline
\multicolumn{3}{|l|}{\textcolor{blue}{\textbf{Test (B)}}}                                                                                                                                                                                                                                                                   \\ \hline
\multicolumn{1}{|l|}{\multirow{6}{*}{\begin{tabular}[c]{@{}l@{}}$F(x,y,r)=\sum_{i=1}^dx_i^2+y_i^2+(r_i-x_i)^2$\\ $G(x,y,r)=L(x-y)$\\ $E(x,y,r)=R(r-y)$\end{tabular}}}                                         & \multicolumn{1}{l|}{\multirow{2}{*}{success rate}}                 & \multirow{2}{*}{94\%}                  \\
\multicolumn{1}{|l|}{}                                                                                                                                                                                        & \multicolumn{1}{l|}{}                                              &                                        \\ \cline{2-3} 
\multicolumn{1}{|l|}{}                                                                                                                                                                                        & \multicolumn{1}{l|}{\multirow{2}{*}{$\mathbb{E}[\texttt{error}]$}} & \multirow{2}{*}{$1.118\times 10^{-1}$} \\
\multicolumn{1}{|l|}{}                                                                                                                                                                                        & \multicolumn{1}{l|}{}                                              &                                        \\ \cline{2-3} 
\multicolumn{1}{|l|}{}                                                                                                                                                                                        & \multicolumn{1}{l|}{\multirow{2}{*}{running time (s)}}             & \multirow{2}{*}{19.092}                \\
\multicolumn{1}{|l|}{}                                                                                                                                                                                        & \multicolumn{1}{l|}{}                                              &                                        \\ \hline\hline
\multicolumn{3}{|l|}{\textcolor{blue}{\textbf{Test (C)}}}                                                                                                                                                                                                                                                                   \\ \hline
\multicolumn{1}{|l|}{\multirow{6}{*}{\begin{tabular}[c]{@{}l@{}}$F(x,y,r)=\sum_{i=1}^d(x_i-1)^2+(y_i-1)^2+(r_i-1)^2$\\ $G(x,y,r)=\sum_{i=1}^d(y_i-x_i)^2$\\ $E(x,y,r)=\sum_{i=1}^d(r_i-y_i)^2$\end{tabular}}} & \multicolumn{1}{l|}{\multirow{2}{*}{success rate}}                 & \multirow{2}{*}{100\%}                 \\
\multicolumn{1}{|l|}{}                                                                                                                                                                                        & \multicolumn{1}{l|}{}                                              &                                        \\ \cline{2-3} 
\multicolumn{1}{|l|}{}                                                                                                                                                                                        & \multicolumn{1}{l|}{\multirow{2}{*}{$\mathbb{E}[\texttt{error}]$}} & \multirow{2}{*}{$2.110\times 10^{-4}$} \\
\multicolumn{1}{|l|}{}                                                                                                                                                                                        & \multicolumn{1}{l|}{}                                              &                                        \\ \cline{2-3} 
\multicolumn{1}{|l|}{}                                                                                                                                                                                        & \multicolumn{1}{l|}{\multirow{2}{*}{running time (s)}}             & \multirow{2}{*}{9.384}                \\
\multicolumn{1}{|l|}{}                                                                                                                                                                                        & \multicolumn{1}{l|}{}                                              &                                        \\ \hline
\end{tabular}
\captionof{table}{The numerical results obtained with \texttt{Algorithm \ref{algo: tri-level}}
 for tri-level optimization problems \eqref{trilevel opt}, where $n=m=p=d=10$.\label{tb:trilevel}}
\end{table}

The results in \Cref{tb:trilevel} show that \texttt{Algorithm \ref{algo: tri-level}} is capable of finding solutions to  tri-level optimization problems and achieving satisfactory accuracy. 

\subsection{Min-max optimization}
\label{sec:min_max}

In this section, we consider the min-max optimization problem (\ref{minmax opt}), where $F:\mathcal{X}\times\mathcal{Y}\rightarrow\mathbb{R}$, with $\mathcal{X}=\mathbb{R}^{d}$ and $\mathcal{Y}=\mathbb{R}^{d}$. We first consider the one dimensional setting to visualize the behaviour of the \texttt{MS-CBO} algorithm.  The  parameters are chosen as
 \begin{equation*}
\begin{aligned}
    & \alpha=\beta=10^{15},\quad T_x=10, \; T_y=2,\quad \Delta t=\Delta \tau=0.1,\quad  N=20,\; M=5,\quad \kappa=1,\\
    &\lambda_1=\lambda_2=1 ,\quad \sigma_1=\sigma_2=1,\quad \gamma=0.75,\quad \delta_1=\delta_2=0.1,\quad R_1=R_2=10.
\end{aligned}
\end{equation*}
  The  $x$- and $y$-particles are sampled initially from $\rho_0=\mu_0=\mathcal{U}[-4,4]$. 
  
In the following figures, the blue points are the initial configuration $(X^i_0,\mathbf{v}^i_{0})_{i=1}^N$, recall the definition of $\mathbf{v}^i_{t}$ in (\ref{v M i t}),   and the red points are the final position of particles $(X^i_{\mathcal{T}_x},\mathbf{v}^i_{\mathcal{T}_y})_{i=1}^N$.  The illustration are shown in Figure \ref{R1}.

\captionsetup[subfigure]{labelformat=empty}
\begin{figure}[h]
\centering
  \begin{subfigure}[t]{0.48\textwidth}
\includegraphics[width=\textwidth]{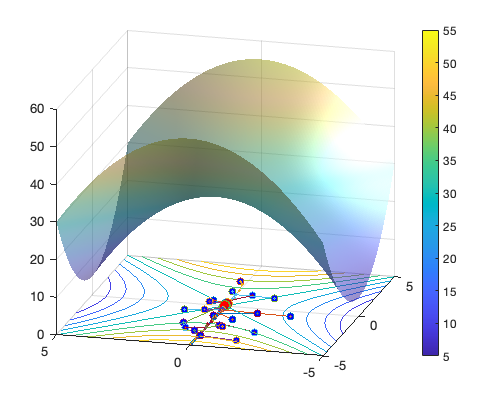}
  \caption{Figure \ref{R1}.1 $F(x,y)=x^2-y^2$}
  \end{subfigure}
\begin{subfigure}[t]
  {0.48\textwidth}
\includegraphics[width=\textwidth]{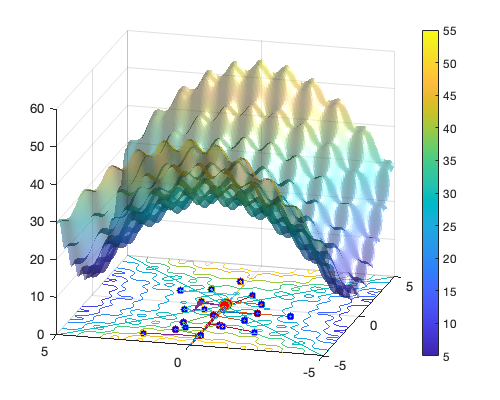}
  \caption{Figure \ref{R1}.2 $F(x,y)=R(x)-R(y)$}
   \end{subfigure}
\begin{subfigure}[b]
  {0.5\textwidth}
\includegraphics[width=1\textwidth]{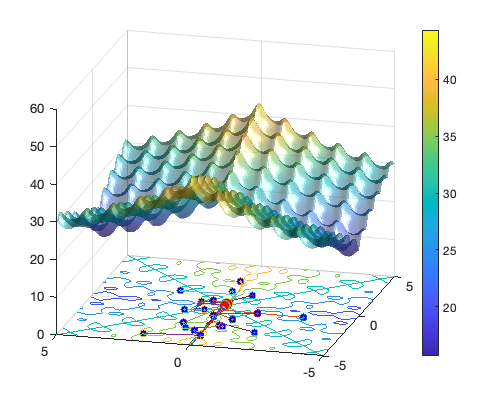}
\caption{Figure \ref{R1}.3 $F(x,y)=A(x)-A(y)$}
\end{subfigure}
\begin{subfigure}[b]
  {0.46\textwidth}
\includegraphics[width=\textwidth]{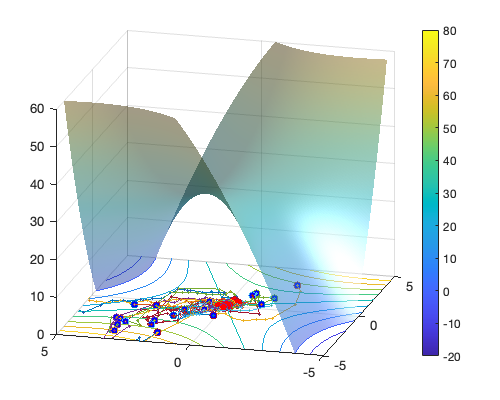}   
 \caption{Figure \ref{R1}.4 $F(x,y)=x^2-y^2-2xy$}
\end{subfigure}
\caption{Illustration of the dynamics for multiscale CBO. The graphs are shifted by $+30$ units.}
\label{R1}
\end{figure}
Then, we further test the performance of \texttt{MS-CBO} by comparing with existing CBO methods, \texttt{SP-CBO} refers to the CBO algorithm proposed in \cite{huang2024consensus}, where \texttt{SP} stands for \textit{saddle points}.

The test functions are defined as
\begin{enumerate}[label=\textcolor{blue}{\textbf{(\alph*)}}]
    \item \textbf{Ackley function:}
    \[
    F(x,y)=A(x)-A(y),
    \]
     \item \textbf{A non-separable (NS) Rastrigin function:}
    \[
    F(x,y)=R(x)-R(y)-2\sum_{i=1}^{d}x_iy_i,
    \]
    \item \textbf{Levy function:} 
    \[
    F(x,y)=L(x)-L(y),
    \]
    \item \textbf{A non-separable function:} 
    \[
    F(x,y)=\sum_{i=1}^{d} x_i^2-\sum_{i=1}^{d} y_i^2-2\sum_{i=1}^{d}x_iy_i.
    \]
\end{enumerate}
The parameters of both \texttt{MS-CBO} and \texttt{SP-CBO} are chosen as in  (\ref{paramaters}).

Note that in \texttt{SP-CBO}, the number of particles in both the $X$- and $Y$-systems are $N$ and time horizon is chosen as $T = T_x = 50$. The numerical experiments for the \texttt{MS-CBO} algorithm are divided into two cases: when $\kappa = 1$ and when $\kappa = 1/\sqrt{M} - 0.01$.
 The numerical results of min-max optimization problems are shown in Table \ref{tb:minmax}. 

\begin{table}[h]
\small
\begin{tabular}{|ll|ll|l|}
\hline
\multicolumn{2}{|l|}{\multirow{2}{*}{\qquad\qquad\quad$d=10$}}                                                                                                           & \multicolumn{2}{l|}{\textcolor{blue}{\qquad\qquad\quad\texttt{MS-CBO}}}  & \multirow{2}{*}{\textcolor{blue}{\quad\texttt{SP-CBO}}} \\ \cline{3-4}
\multicolumn{2}{|l|}{}                                                                                                                                              & \multicolumn{1}{l|}{$\kappa=1$}                 & $\kappa=1/\sqrt{M}-0.01$   &                                                               \\ 
\hline 
\hline 
\multicolumn{1}{|l|}{\multirow{3}{*}{\begin{tabular}[c]{@{}l@{}}Function \textcolor{blue}{\textbf{(a)}}\\ Ackley\end{tabular}}}      & success rate                 & \multicolumn{1}{l|}{100\%}                 & 100\%                 & 99\%                                                          \\ \cline{2-5} 
\multicolumn{1}{|l|}{}                                                                                                               & $\mathbb{E}[\texttt{error}]$ & \multicolumn{1}{l|}{7.689$\times 10^{-5}$} & 1.454$\times 10^{-4}$ & 8.714$\times 10^{-3}$                                         \\ \cline{2-5} 
\multicolumn{1}{|l|}{}                                                                                                               & running time (s)             & \multicolumn{1}{l|}{2.991}                & 2.939               & 0.011                                                         \\ \hline\hline
\multicolumn{1}{|l|}{\multirow{3}{*}{\begin{tabular}[c]{@{}l@{}}Function \textcolor{blue}{\textbf{(b)}}\\ NS-Rastrigin\end{tabular}}} & success rate                 & \multicolumn{1}{l|}{97\%}                  & 77\%                  & 5\%                                                           \\ \cline{2-5} 
\multicolumn{1}{|l|}{}                                                                                                               & $\mathbb{E}[\texttt{error}]$ & \multicolumn{1}{l|}{4.159$\times 10^{-2}$} & 0.245                & 2.597                                                         \\ \cline{2-5} 
\multicolumn{1}{|l|}{}                                                                                                               & running time (s)             & \multicolumn{1}{l|}{3.401}                & 3.107                & 0.013                                                         \\ \hline\hline
\multicolumn{1}{|l|}{\multirow{2}{*}{\begin{tabular}[c]{@{}l@{}}Function \textcolor{blue}{\textbf{(c)}}\\ Levy\end{tabular}}}        & success rate                 & \multicolumn{1}{l|}{100\%}                 & 100\%                 & 99\%                                                          \\ \cline{2-5} 
\multicolumn{1}{|l|}{}                                                                                                               & $\mathbb{E}[\texttt{error}]$ & \multicolumn{1}{l|}{9.007$\times 10^{-5}$} & 1.653$\times 10^{-4}$ & 2.664$\times 10^{-2}$                                         \\ \cline{2-5} 
\multicolumn{1}{|l|}{}                                                                                                               & running time (s)             & \multicolumn{1}{l|}{4.736}                 & 4.734                & 0.015                                                         \\ \hline\hline
\multicolumn{1}{|l|}{\multirow{2}{*}{\begin{tabular}[c]{@{}l@{}}Function \textcolor{blue}{\textbf{(d)}}\\ NS\end{tabular}}}          & success rate                 & \multicolumn{1}{l|}{100\%}                 & 100\%                 & 100\%                                                          \\ \cline{2-5} 
\multicolumn{1}{|l|}{}                                                                                                               & $\mathbb{E}[\texttt{error}]$ & \multicolumn{1}{l|}{1.964$\times 10^{-3}$} & 2.466$\times 10^{-3}$ & 1.474$\times 10^{-3}$                                         \\ \cline{2-5} 
\multicolumn{1}{|l|}{}                                                                                                               & running time (s)             & \multicolumn{1}{l|}{2.019}                & 2.007                & 0.008                                                         \\ \hline
\end{tabular}
\captionof{table}{The numerical results of min-max optimization problems\label{tb:minmax}}
\end{table}

As mentioned in \Cref{rem: model reduction} and \Cref{sec:sensitivity}, we implement the \texttt{for} loop of $y$-particles in parallel with $T_y\ll T_x$.  Although the running time of \texttt{MS-CBO} is larger than the running time of \texttt{SP-CBO}, the advantage is at the level of the numerical complexity, since we have $\mathcal{O}(N^2)$ for \texttt{SP-CBO}, whereas \texttt{MS-CBO} is of $\mathcal{O}(NM)$ with $M\ll N$.  This would be beneficial when the underlying dimension $d$ grows, or when the structure of the optimization problem becomes more complex  (e.g. multi-level problems) for which a larger number of particles would be needed, and consequently a lower numerical complexity would be desirable. 

To further explore the performance of the algorithm \texttt{MS-CBO}, we now choose  different $M$ and $N$, while the other parameters are kept as in the previous test. The results we obtain in this setting are summarized in  \Cref{fig:minmax}.

\begin{figure}[!h]
\centering

\begin{subfigure}[b]{\textwidth}
  \centering
  \subfloat[$d=5$]{
    \includegraphics[width=0.32\textwidth]{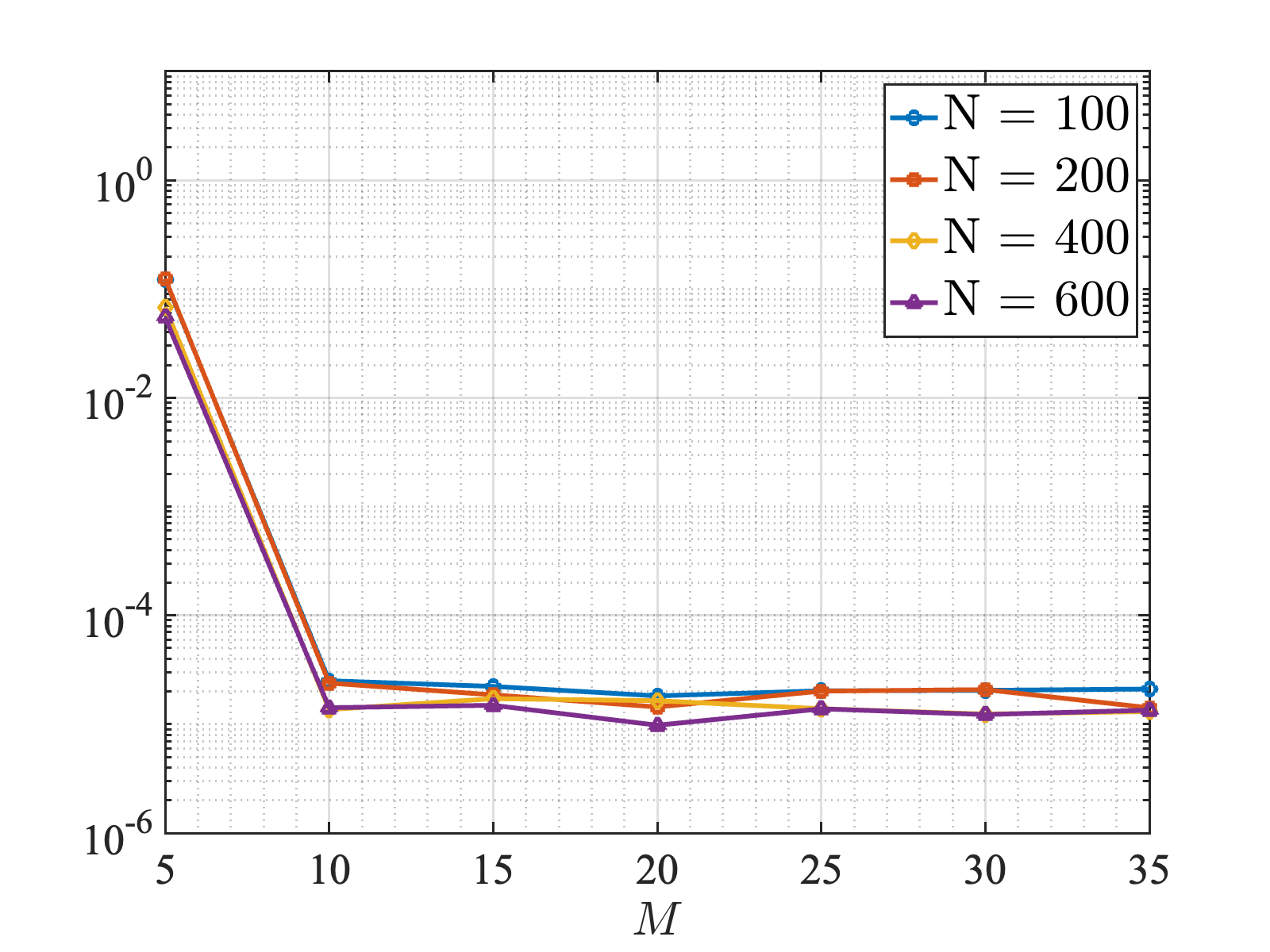}
  }
  \hspace{-0.9em}
  \subfloat[$d=10$]{
    \includegraphics[width=0.32\textwidth]{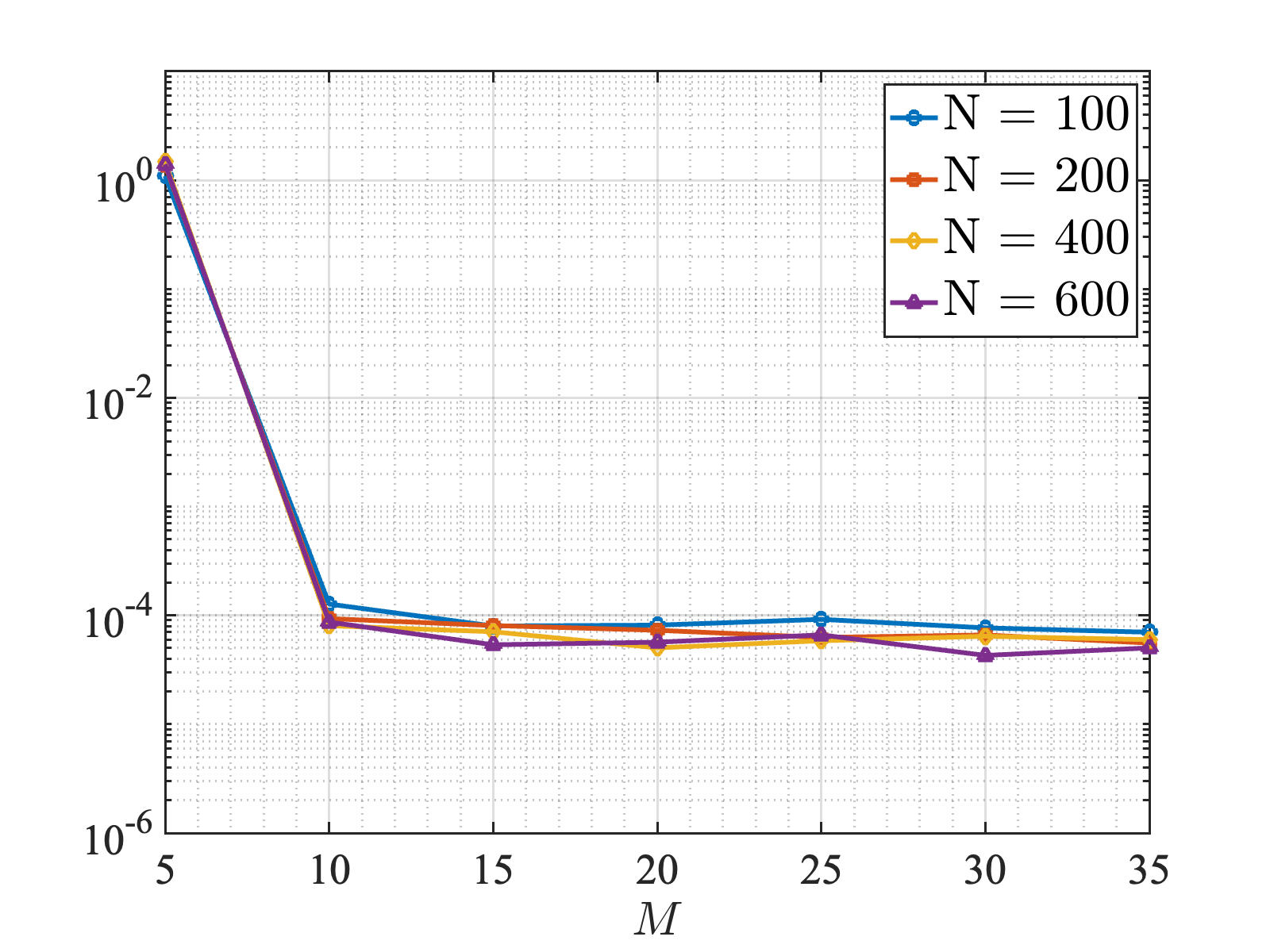}
  }
  \hspace{-0.9em}
  \subfloat[$d=20$]{
    \includegraphics[width=0.32\textwidth]{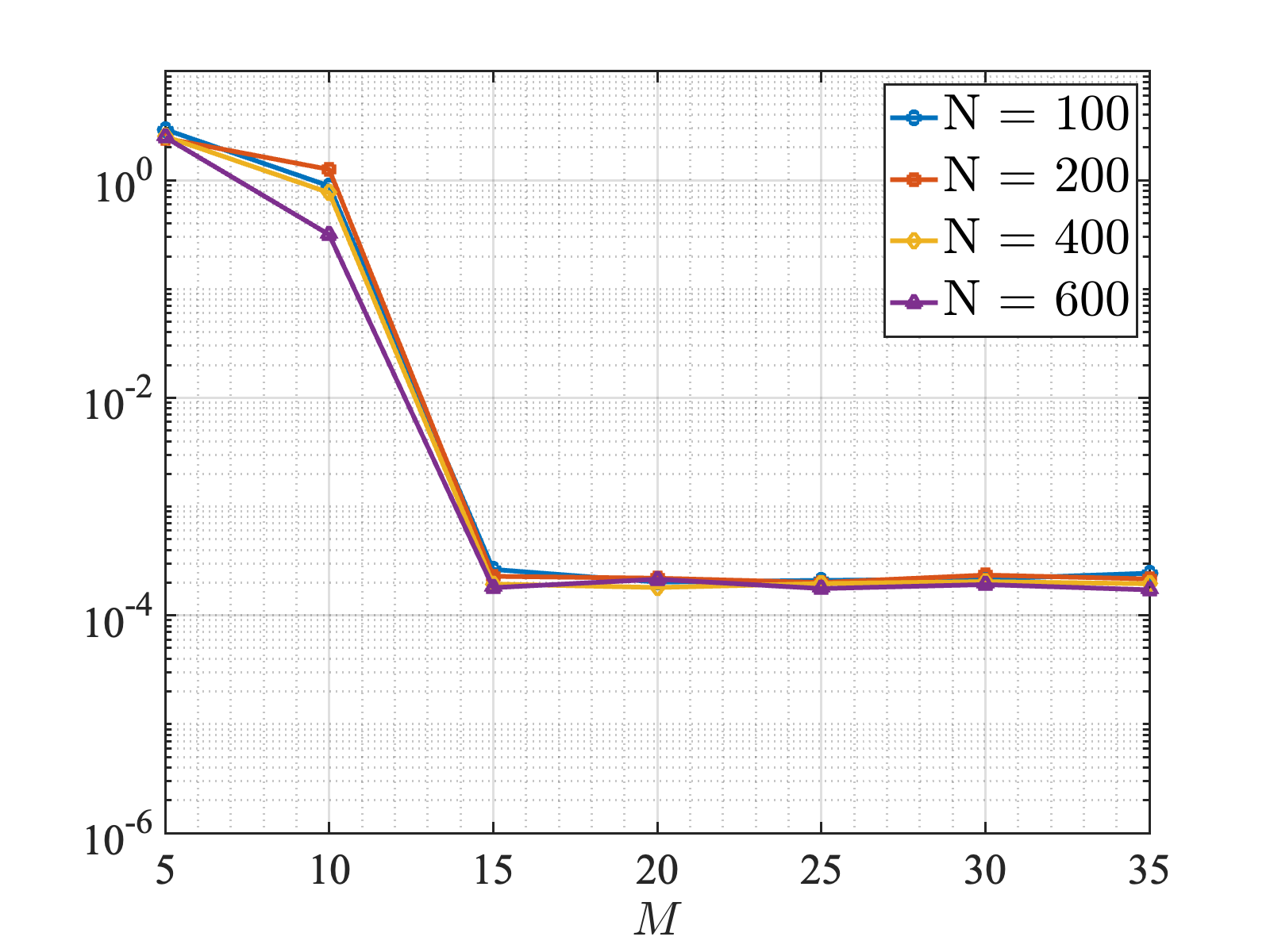}
  }
  \caption{Figure \ref{fig:minmax}.1 Function \textcolor{blue}{\textbf{(a)}}}
\end{subfigure}

\vspace{1em}

\begin{subfigure}[b]{\textwidth}
  \centering
  \subfloat[$d=5$]{
    \includegraphics[width=0.32\textwidth]{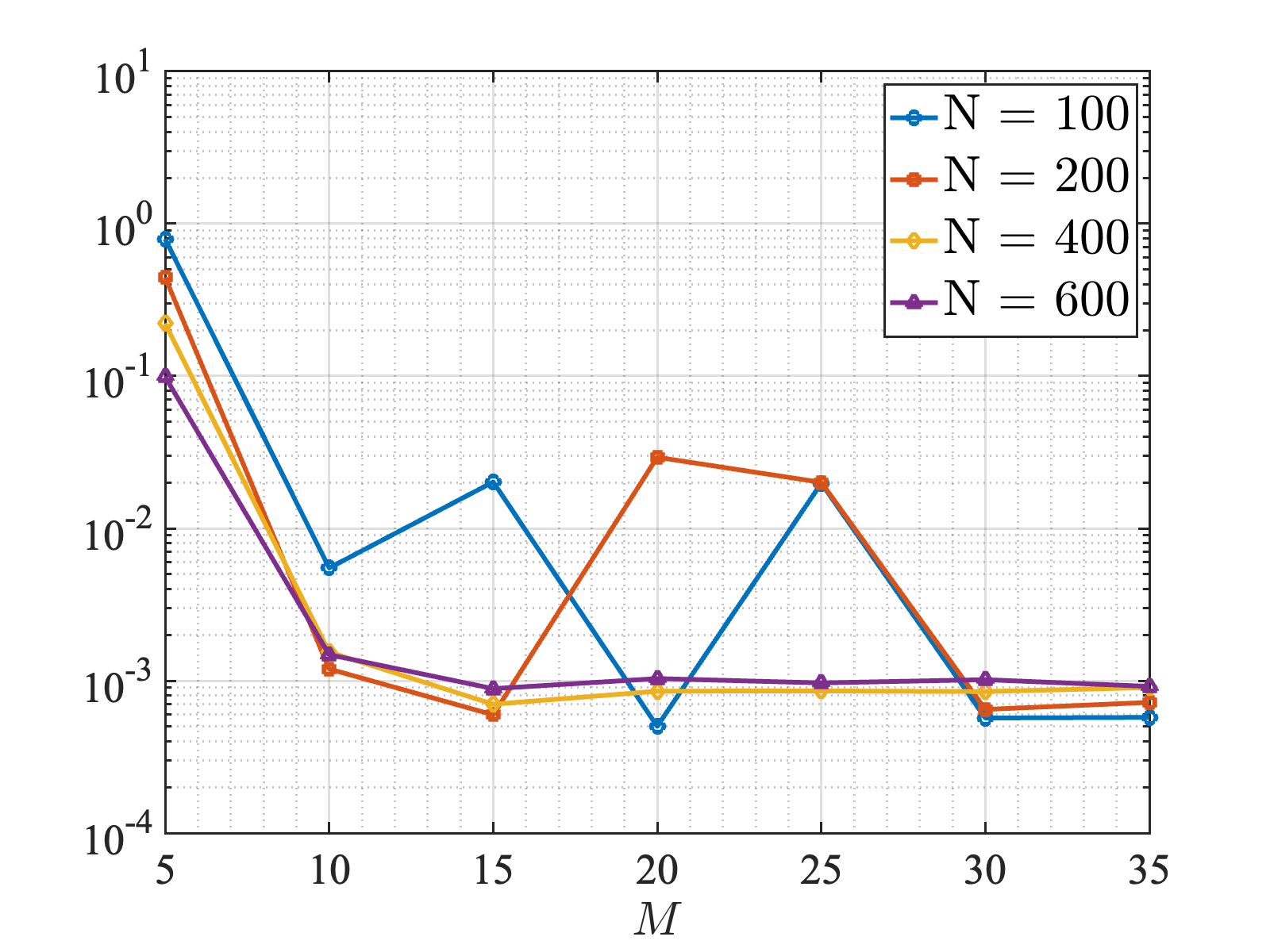}
  }
  \hspace{-0.9em}
  \subfloat[$d=10$]{
    \includegraphics[width=0.32\textwidth]{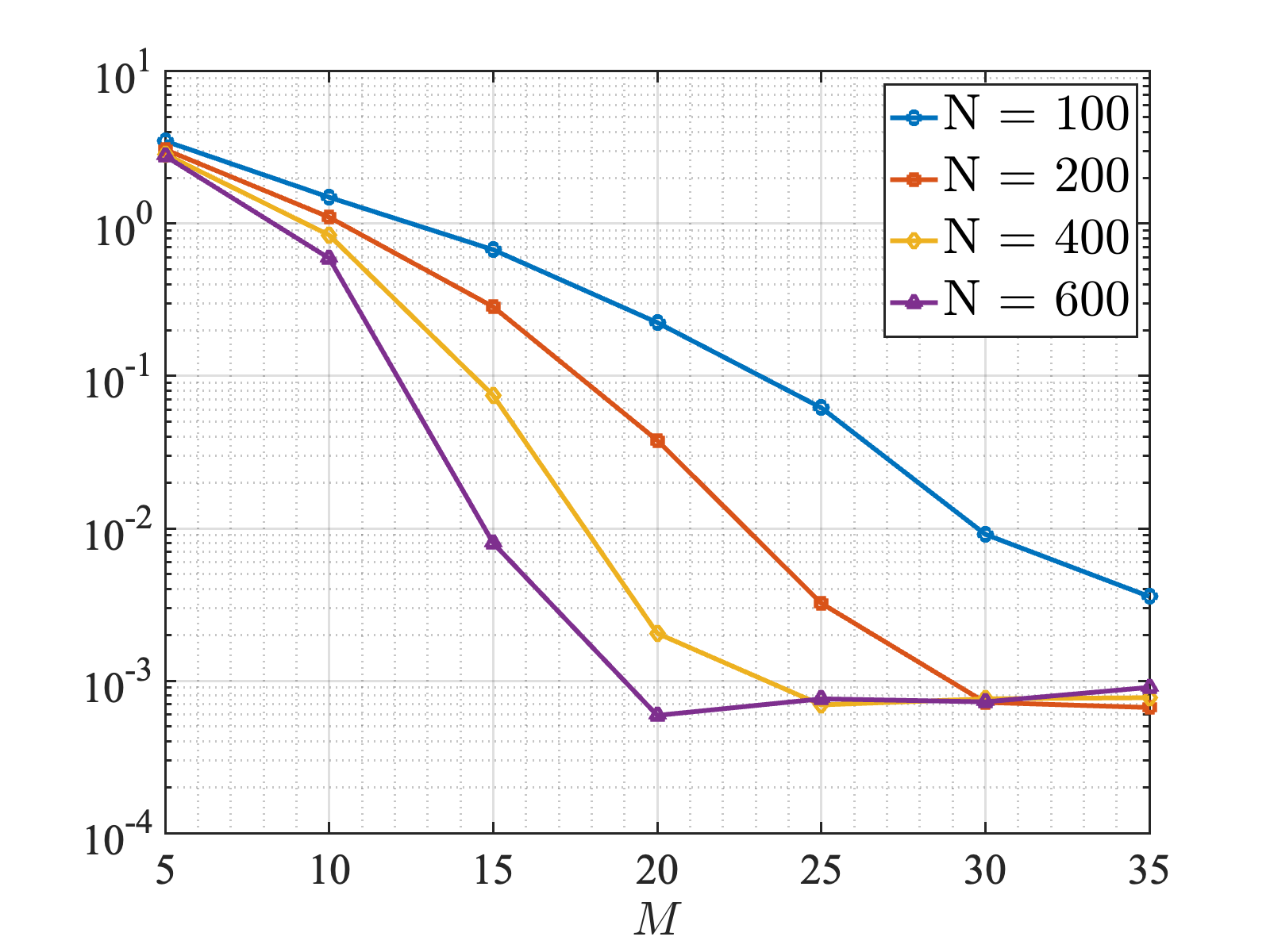}
  }
  \hspace{-0.9em}
  \subfloat[$d=20$]{
    \includegraphics[width=0.32\textwidth]{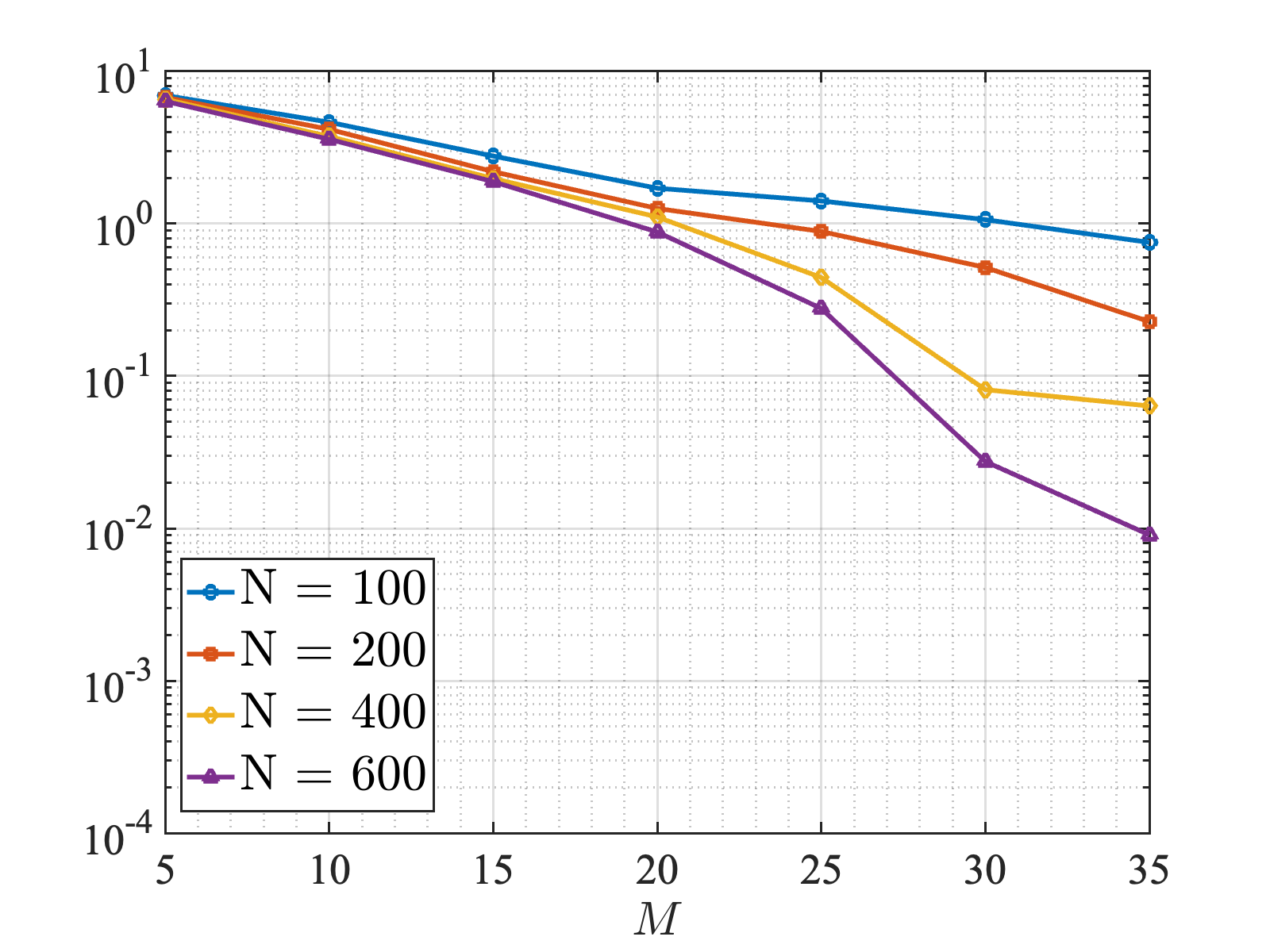}
  }
  \caption{Figure \ref{fig:minmax}.2 Function \textcolor{blue}{\textbf{(b)}}}
\end{subfigure}
\caption{Numerical results of the \texttt{MS-CBO} method with $\kappa=1$ applied to min-max optimization problems. Results are shown for benchmark functions \textcolor{blue}{\textbf{(a)}} (Figure \ref{fig:minmax}.1) and \textcolor{blue}{\textbf{(b)}} (Figure \ref{fig:minmax}.2) with  dimensions $d=5,10,20$. Each subfigure illustrates the $\mathbb{E}[\texttt{error}]$ achieved with different choices of $M$ and $N$.}
\label{fig:minmax}
\end{figure}

Although optimal choices of $M$ and $N$ depend on the specific optimization problem, the numerical results in \Cref{fig:minmax}  suggest that selecting $M\ll N$ is sufficient for solving min-max problems effectively, even in relatively high-dimension. These results will be further investigated in the parameter sensitivity analysis presented in the next section.

\subsection{Parameters sensitivity analysis}\label{sec:sensitivity}

The performance of \texttt{MS-CBO}  depends on the choice of parameters. Therefore it is worth investigating how the parameters $M, N, T_x, T_y, \lambda$, and $\sigma$ impact the accuracy of the algorithm. Figure \ref{fig:chooseT&M} shows this sensitivity. First, we fix $N=100$ and $T_y=0.5$, and examine the influence of changing $M$ and $T_x$ on accuracy for all test functions in \Cref{sec:bilevel}.

\begin{figure}[!h]
    \centering
 \begin{subfigure}[t]{0.48\textwidth}
\includegraphics[width=\textwidth]{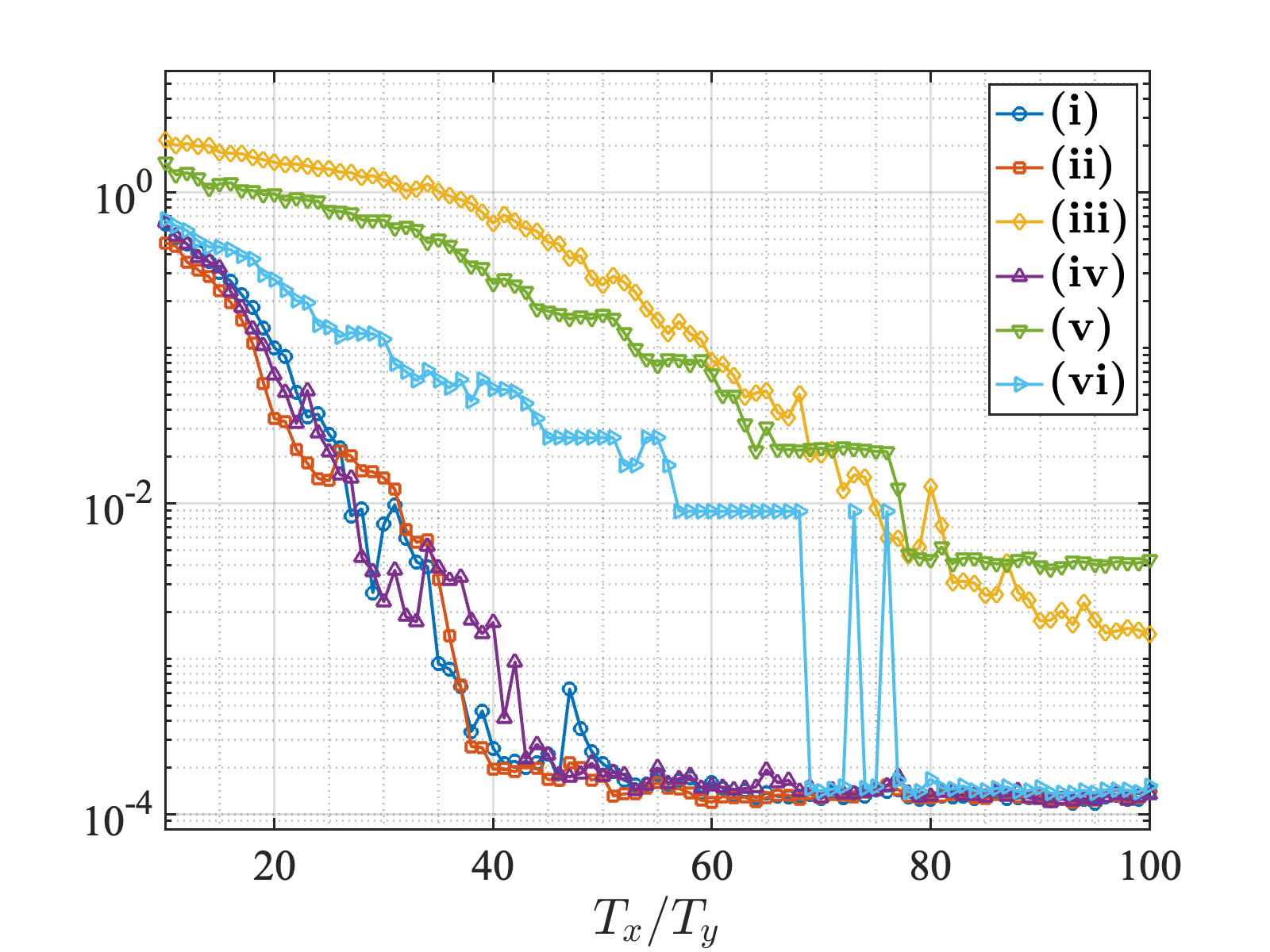}
  \caption{Figure \ref{fig:chooseT&M}.1 Varying parameter $T_x/T_y$}
  \end{subfigure}
\begin{subfigure}[t]
  {0.48\textwidth}
\includegraphics[width=\textwidth]{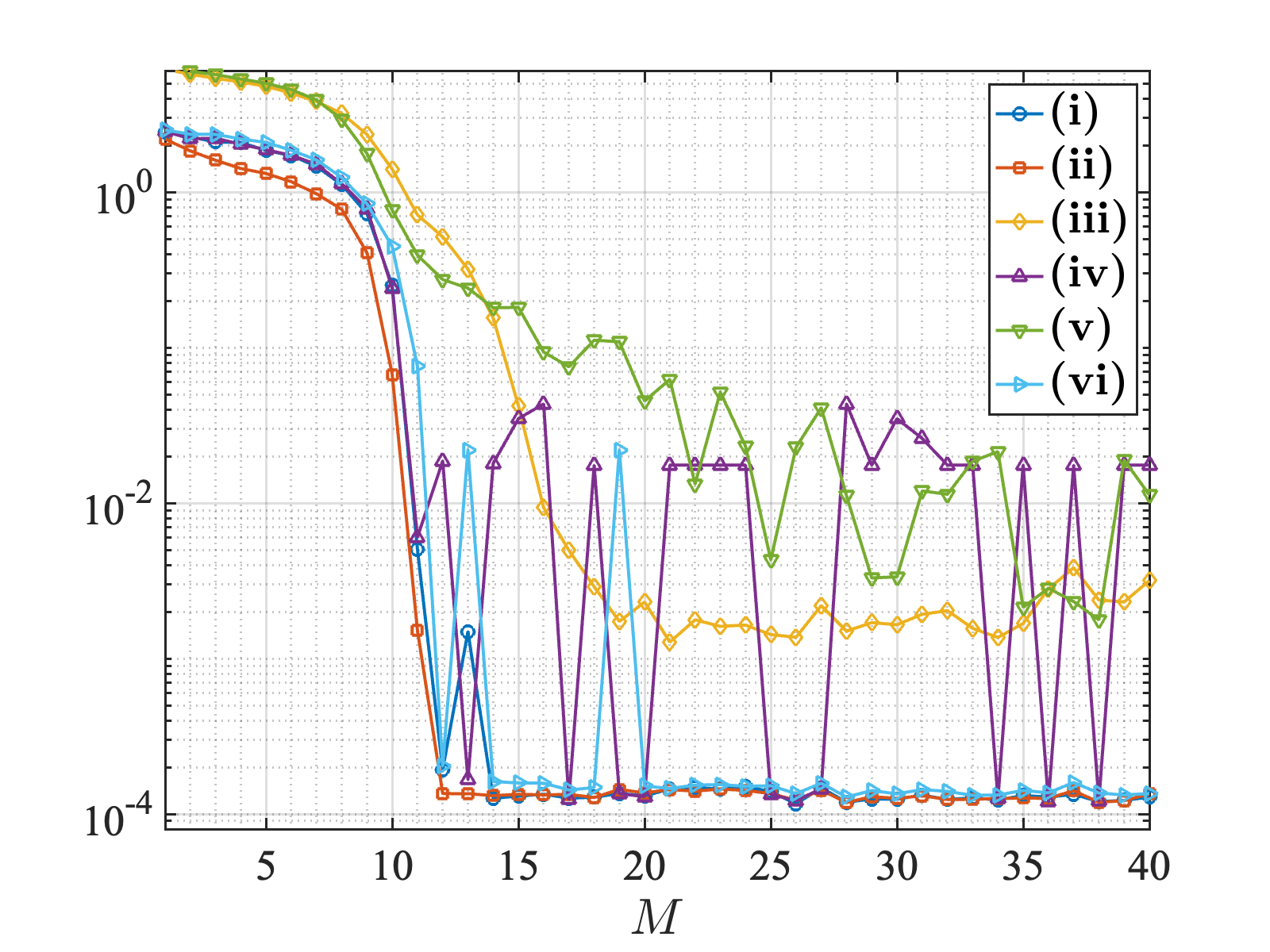}
  \caption{Figure \ref{fig:chooseT&M}.2 Varying parameter $M$}
   \end{subfigure} 
   \begin{subfigure}[t]
  {0.48\textwidth}
\includegraphics[width=\textwidth]{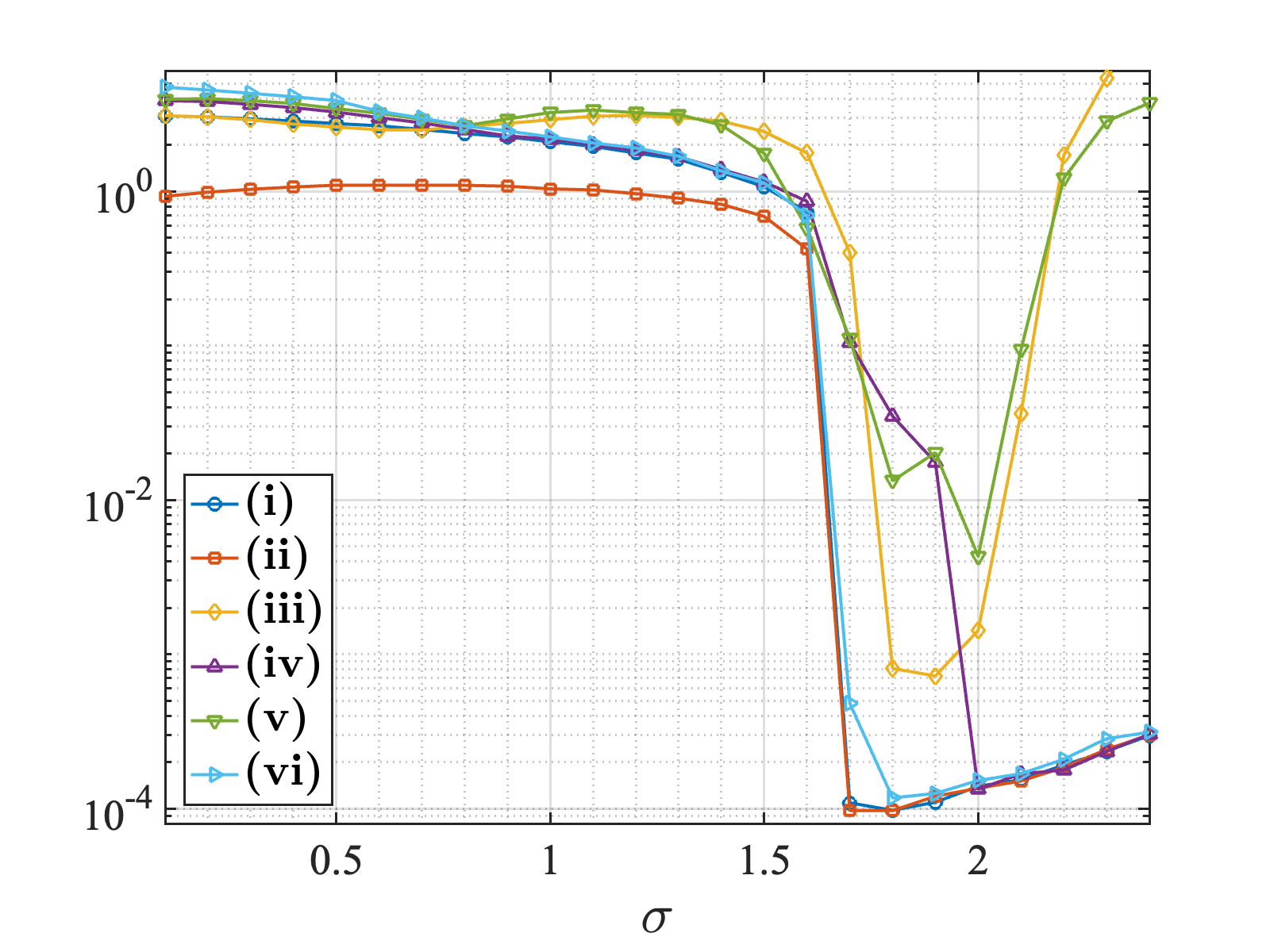}
  \caption{Figure \ref{fig:chooseT&M}.3 Varying parameter $\sigma$}
   \end{subfigure} 
    \caption{$\mathbb{E}[\texttt{error}]$ for benchmark objectives \textcolor{blue}{\textbf{(i)}}-\textcolor{blue}{\textbf{(vi)}} (defined in Table \ref{tb:bilevel})  as a function of the parameter $T_x/T_y$ (Figure \ref{fig:chooseT&M}.1), $M$ (Figure \ref{fig:chooseT&M}.2) and $\sigma$ (Figure \ref{fig:chooseT&M}.3). Only the indicated parameter is varied; all other parameters remain the same as (\ref{paramaters}).}
    \label{fig:chooseT&M}
\end{figure}

As shown in Figure \ref{fig:chooseT&M}.1, accuracy improves with ratio 
$T_x /T_y$ increases, and stabilizes once this ratio is sufficiently large. Similarly, Figure \ref{fig:chooseT&M}.2 is consistent with the earlier observations from the min-max experiments (\Cref{fig:minmax}), 
indicating that $M$ need not be as large as $N$; in most cases, choosing 
$M\geq 20$ ensures satisfactory performance of the \texttt{MS-CBO} method. Note that for benchmark function \textbf{(iv)}, the accuracy exhibits fluctuations as $M$ increases.  This occurs because, despite high success rates (98\%–100\%), a few unsuccessful runs significantly increase the averaged error. However, apart from these rare unsuccessful runs, the algorithm consistently solves the problem effectively.

Another important pair of parameters to choose is $\lambda$ and $\sigma$.  As mentioned in \Cref{rem:paramaters} (1),  in multi-level optimization problems, our empirical observations suggest that the optimal choice of $\sigma$ differs from literature. Specifically, as can be seen in Figure \ref{fig:chooseT&M}.3, when fixing $\lambda=1$, optimal values  of $\sigma$ lie in the interval $[1.8,2.1]$. Values of 
$\sigma$ below this range introduce insufficient randomness, while values above it cause excessive diffusion, both negatively impact the accuracy. All these results  offer practical guidance for selecting parameters in applications of \texttt{MS-CBO} method.

\newpage

\section{Summary and conclusion}\label{sec: conclude}

We have developed a multiscale version of the CBO algorithm,  specifically designed for solving multi-level optimization problems. 
The main idea relies on a singularly perturbed system of SDEs: each scale corresponds to a level in the optimization problem. By using the celebrated averaging principle, this system is reduced  to a single dynamics whose coefficients are averaged with respect to an invariant measure.  The convergence of the original system to the averaged system  has been proved in \Cref{thm: conv}, and the presented algorithms have been tested on various optimization problems with satisfactory results.

Some practical modifications on the CBO model have been made in order to allow the averaging (obtained when $\varepsilon \to 0$) to be proved, in particular using a parameter $\kappa$ to strengthen the drift term. 
In \Cref{thm: conv}, this parameter is assumed to be small in order to guarantee the averaging. However, this is not a compulsory requirement in numerical implementation.  Therefore, a sharper convergence analysis for broader choice of $\kappa$ could be worth investigating.   Another natural direction for further investigation is to explore the convergence guarantees of mean-field dynamics to the global solution of the optimization problem. We can study the mean-field limit of the averaged (effective) model (\ref{ave_model}) as $N\rightarrow \infty$,  similar to  the theoretical analysis of standard CBO methods.  Yet, this is a challenging topic as difficulty arises from the invariant measure and its regularity with respect to the $x$-variable. On the other hand, the criteria of choosing optimal parameters in different dynamics remain to be understood, e.g., a more precise relationship is to be established between $M$ and  $N$, $T_x$ and $T_y$ in bi-level problems.  Lastly, in both Algorithms \ref{algo averaging} and \ref{algo: tri-level}, we  used  the approximation \eqref{FLA} to handle the averaging with respect to the invariant measure. But we believe the implementation of the two algorithms could be further improved if one improves the computation of the invariant measure or the averaging with its respect.

\subsection*{Acknowledgments}
The authors wish to thank the two referees for their valuable comments which helped improve the paper.

\appendix

\section{Proof of Theorem \ref{thm: conv}}\label{app: proof conv}

We start by defining some functional spaces that we shall use. 
\begin{itemize}
    \item If $\theta \in (0,1)$, then $C^{\theta}_{b}(\mathbb{R}^{d})$ denotes the usual space of locally H\"older continuous functions with exponent $\theta$, and that are moreover bounded. 
    \item If $\theta = 1$, then $C^{1}_{b}(\mathbb{R}^{d})$ denotes the space of bounded and Lipschitz continuous functions. 
    \item If $\theta \in (1,2)$, then $C^{\theta}_{b}(\mathbb{R}^{d})$ denotes the space of functions that are in $C^{1}_{b}(\mathbb{R}^{d})$ and whose first-order derivatives are in $C^{\theta-1}_{b}(\mathbb{R}^{d})$, noting that $\theta-1 \in (0,1)$.
    \item If $\theta = 2$, then $C^{2}_{b}(\mathbb{R}^{d})$ denotes the space of bounded functions whose first-order derivatives are Lipschitz continuous. 
\end{itemize}
Now let $(x,y)\in \mathbb{R}^{d_{1}}\times \mathbb{R}^{d_{2}}$ for some $d_{1},d_{2}\in \mathbb{N}$, and let $f=f(x,y)$ be a function of $(x,y)$. We denote by $C_{b}^{\theta, \delta} := C_{b}^{\theta}(\mathbb{R}^{d_{1}}; C_{b}^{\delta}(\mathbb{R}^{d_{2}}))$ the space functions $f$ such that $x\mapsto f(x,y) \in C_{b}^{\theta}(\mathbb{R}^{d_{1}})$ locally uniformly in $y$, and $y \mapsto f(x,y) \in C_{b}^{\delta}(\mathbb{R}^{d_{2}})$ locally uniformly in $x$.

\begin{proof}[Proof of Theorem \ref{thm: conv}]
This shall be a consequence of statement $(i)$ in \cite[Theorem 2.3]{rockner2021diffusion} or, more precisely, of the particular case discussed in statement $(i)$ of Remark 2.5 therein. (A similar result can also be found in \cite[Theorem 2.5]{rockner2019strong}.) \\
Therefore, it suffices to check that our system of coupled SDEs \eqref{ms_CBO} satisfies the assumptions in \cite{rockner2021diffusion}. To this end, and for better readability, we first highlight the correspondence between our notation and the one in \cite{rockner2021diffusion} as follows:
\begin{equation*}
\begin{aligned}
    & \text{\textbullet\; in \cite{rockner2021diffusion}}: &   X^{\varepsilon},\; & Y^{\varepsilon},\; & b,\; & \sigma,\; & F,\; & G,\; & c,\; & H,\; & \alpha_{\varepsilon}.\\
    & \text{\textbullet\; in \eqref{ms_CBO}}: & \mathds{Y}^{\varepsilon}\,,\; & \mathds{X}^{\varepsilon},\; & \mathscr{B},\; & \mathscr{H},\; & \mathscr{F},\; & \mathscr{G},\; & 0,\; & \,0,\; & \sqrt{\varepsilon}.
\end{aligned}
\end{equation*}
We can now state the assumptions needed, and prove that they are satisfied. 

\begin{enumerate}[label = (\roman*)]
    \item $\mathscr{H}\mathscr{H}^{*}$ is bounded and non-degenerate in $y$, uniformly with respect to $x$. \textit{(This corresponds to assumptions $(A_\sigma)$ in \cite{rockner2021diffusion}.)}
    \item $\mathscr{G}\mathscr{G}^{*}$ is bounded and non-degenerate in $x$, uniformly with respect to $y$. \textit{(This corresponds to assumptions $(A_G)$ in \cite{rockner2021diffusion}.)}
    \item The drift of the fast process satisfies a recurrence assumption which ensures the existence of an invariant measure $\Upsilon_{x}(\text{d}y)$, and that is 
    \begin{equation*}
        \lim\limits_{|y|\to +\infty} \, \sup\limits_{x} \, \langle y, \, \mathscr{B}(x,y) \rangle = -\infty.
    \end{equation*}
    \textit{(This corresponds to assumptions $(A_b)$ in \cite{rockner2021diffusion}.)}
    \item The coefficients $\mathscr{F},\mathscr{G}$ (of the slow variables $\mathds{X}^{\varepsilon}$), and $\mathscr{B},\mathscr{H}$ (of the fast variables $\mathds{Y}^{\varepsilon}$) are in $C_{b}^{\theta,\delta}$, with $\theta\in (0,1)$ and $\delta \in (0,1)$. \textit{(This is an assumption slightly stronger than the regularity requirement in \cite{rockner2021diffusion}.)}
\end{enumerate}

Our construction of $\mathscr{H}$ and $\mathscr{G}$ in \S \ref{sec: MS CBO} guarantee the validity of assumptions (i) and (ii). To check assumption (iii), we proceed as follows. Given $\mathbf{x}=(x_{i})_{1\leq i \leq N}$, where $x_{i} \in \mathbb{R}^{n}$, and setting $\mathds{Y}^{\varepsilon}=:\mathds{Y}$ where
\begin{equation*}
    \mathds{Y} = (\mathbf{Y}_{i})_{1\leq i \leq N} \in \mathbb{R}^{mMN},\quad \text{ where } \mathbf{Y}_{i} = (Y^{j}_{x_{i}})_{1\leq j \leq M} \in \mathbb{R}^{mM}, \quad \text{ and } Y^{j}_{x_{i}} \in \mathbb{R}^{m},
\end{equation*}
with $Y^{j}_{x_{i}} = (Y^{j}_{x_{i}}(k))_{1\leq k \leq m}$, 
we have 
\begin{equation*}
    \mathscr{B}(\mathbf{x},\mathds{Y}) = \big( B(x_{i},\mathbf{Y}_{i}) \big)_{1\leq i \leq N}, \quad \text{ where } \quad 
    B(x_{i},\mathbf{Y}_{i}) = \big( b_{i}^{j}(x_{i},\mathbf{Y}_{i}) \big)_{1\leq j \leq M}
\end{equation*}
and 
\begin{equation*}
\begin{aligned}
    b_{i}^{j}(x_{i},\mathbf{Y}_{i}) 
    & = -\lambda_{2} \, \psi_{R_{2}} \left(\,Y_{x_{i}}^j - \kappa\,\sum_{r=1}^M Y_{x_{i}}^r\frac{\varpi_{\beta}\left(x_i, Y_{x_{i}}^r\right)}{\sum_{\ell=1}^M \varpi_{\beta}\left(x_i, Y_{x_{i}}^{\ell}\right)}\,\right)\in\mathbb{R}^{m}
\end{aligned}
\end{equation*}
using the truncation function defined, for a vector $v\in \mathbb{R}^{d}$, by 
\begin{equation*}
    \psi_{R}(v) = \big(\psi_{R}(v)_{k}\big)_{1\leq k\leq d} \quad \text{ where } \psi_{R}(v)_{k} = \left\{\;
    \begin{aligned}
        & v_{k} \; , \quad \quad \text{ if } \; |v_{k}|\leq R\\
        & \frac{v_{k}}{|v_{k}|}R \; , \quad \text{ if } \; |v_{k}|>R.
    \end{aligned}
    \right.
\end{equation*}
Then we obtain
\begin{equation}\label{eq: scalar prod proof}
\begin{aligned}
    \langle\, \mathds{Y}, \, \mathscr{B}(x,\mathds{Y})\, \rangle 
    = \sum\limits_{i=1}^{N} \; \langle \, \mathbf{Y}_{i}, \, B(x_{i}, \, \mathbf{Y}_{i}) \, \rangle = \sum\limits_{i=1}^{N}\sum\limits_{j=1}^{M} \; \langle \,  Y_{x_{i}}^{j}, \, b_{i}^{j}(x_{i}, \mathbf{Y}_{i})\, \rangle
\end{aligned}
\end{equation}
and 
\begin{equation*}
\begin{aligned}
    \langle \,  Y_{x_{i}}^{j}, \, b_{i}^{j}(x_{i}, \mathbf{Y}_{i})\, \rangle 
    & = -\lambda_{2} \, \langle \, Y_{x_{i}}^{j} ,\, \psi_{R_{2}}\left(\,Y_{x_{i}}^j - \kappa\,\sum_{r=1}^M Y_{i}^r \frac{\varpi_{\beta}\left(x_i, Y_{x_{i}}^r \right)}{\sum_{\ell=1}^M \varpi_{\beta}\left(x_i, Y_{x_{i}}^{\ell}\right)}\,\right) \, \rangle.
\end{aligned}
\end{equation*}
Notice that, for any $w,v\in \mathbb{R}^{d}$, we have $\langle w,\psi_{R_{2}}(v) \rangle = \sum_{\tau=1}^{d} \langle w,\psi_{R_{2}}(v) \rangle_{\tau}$ and
\begin{equation*}
    \langle w,\psi_{R_{2}}(v) \rangle_{\tau} =
    \left\{\;
    \begin{aligned}
        & w_{\tau}v_{\tau} \; , \quad \quad \text{ if } \; |v_{\tau}|\leq R_{2},\\
        & w_{\tau}v_{\tau}\frac{R}{|v_{\tau}|} \; , \quad \text{ if } \; |v_{\tau}|>R_{2},
    \end{aligned}
    \right.
    \quad \tau=1, \dots, d.
\end{equation*}
We can then write $\langle w,\psi_{R_{2}}(v) \rangle_{\tau} = w_{\tau}v_{\tau}\,\xi(v_{\tau})$ where $\xi(v_{\tau}) := 1 \wedge \frac{R_{2}}{|v_{\tau}|}$, and $\xi(0)=1$. Note in particular that $0<\xi(v_{\tau})\leq 1$ for all $v_{\tau}\in\mathbb{R}$. 
Therefore we have, for $\tau=1, \dots, m$
\begin{equation*}
\begin{aligned}
    & \langle \,  Y_{x_{i}}^{j}, \, b_{i}^{j}(x_{i}, \mathbf{Y}_{i})\, \rangle_{\tau} \\
    & \quad \quad =  
    \bigg( -\lambda_{2} \, Y_{x_{i}}^{j}(\tau)^{2} +\lambda_{2} \,\kappa \; \sum_{r=1}^M Y_{x_{i}}^{j}(\tau)Y_{x_{i}}^{r}(\tau)\frac{\varpi_{\beta}\left(x_i, Y_{x_{i}}^r\right)}{\sum_{\ell=1}^M \varpi_{\beta}\left(x_i, Y_{x_{i}}^{\ell}\right)}\,\bigg)\\
    & \quad \quad \quad \quad \quad \quad \quad  \quad \quad \times \xi \bigg(\,Y_{x_{i}}^{j}(\tau) - \kappa \,\sum_{r=1}^M Y_{x_{i}}^{r}(\tau) \frac{\varpi_{\beta}\left(x_i, Y_{x_{i}}^r\right)}{\sum_{\ell=1}^M \varpi_{\beta}\left(x_i, Y_{x_{i}}^{\ell}\right)} \, \bigg) .
\end{aligned}
\end{equation*}
Let us denote by $\underline{\xi}$ the following
\begin{equation*}
    \underline{\xi}(\mathds{Y}) := \min\limits_{\substack{1\leq \tau\leq m \\ 1\leq j \leq M \\ 1\leq i \leq N}}\left\{ \xi \bigg(\,Y_{x_{i}}^{j}(\tau) - \kappa \,\sum_{r=1}^M Y_{x_{i}}^{r}(\tau) \frac{\varpi_{\beta}\left(x_i, Y_{x_{i}}^r\right)}{\sum_{\ell=1}^M \varpi_{\beta}\left(x_i, Y_{x_{i}}^{\ell}\right)} \, \bigg)  \right\} \, \in (0,1].
\end{equation*}
We have
\begin{equation}\label{eq: tau coordinate}
\begin{aligned}
    \langle \,  Y_{x_{i}}^{j}, \, b_{i}^{j}(x_{i}, \mathbf{Y}_{i})\, \rangle_{\tau} & \leq -\lambda_{2} \, Y_{x_{i}}^{j}(\tau)^{2} \, \underline{\xi}(\mathds{Y})  +\lambda_{2} \,\kappa\,\Theta_{i}^{j}(\tau)
\end{aligned}
\end{equation}
where 
\begin{equation*}
\begin{aligned}
    \Theta_{i}^{j}(\tau) & := \bigg(\sum_{k=1}^M Y_{x_{i}}^{j}(\tau)Y_{x_{i}}^{k}(\tau)\frac{\varpi_{\beta}\left(x_i, Y_{x_{i}}^k\right)}{\sum_{\ell=1}^M \varpi_{\beta}\left(x_i, Y_{x_{i}}^{\ell}\right)}\,\bigg)\\
    & \quad \quad \quad \times\,\xi \bigg(\,Y_{x_{i}}^{j}(\tau) - \kappa \,\sum_{k=1}^M Y_{x_{i}}^{k}(\tau) \frac{\varpi_{\beta}\left(x_i, Y_{x_{i}}^k\right)}{\sum_{\ell=1}^M \varpi_{\beta}\left(x_i, Y_{x_{i}}^{\ell}\right)} \, \bigg).
\end{aligned}
\end{equation*}
Taking the absolute value, and using $|\xi(\cdot)|\leq 1$, one gets
\begin{equation*}
\begin{aligned}
    \big|\Theta_{i}^{j}(\tau)\big| & \leq \sum_{k=1}^{M} |Y_{x_{i}}^{j}(\tau)|\,|Y_{x_{i}}^{k}(\tau)|\,\frac{\varpi_{\beta}\left(x_i, Y_{x_{i}}^k\right)}{\sum_{\ell=1}^M \varpi_{\beta}\left(x_i, Y_{x_{i}}^{\ell}\right)},
\end{aligned}
\end{equation*}
and then 
\begin{equation*}
\begin{aligned}
    \sum\limits_{\tau=1}^{m} \big|\Theta_{i}^{j}(\tau)\big| & \leq \sum_{k=1}^{M} \sum\limits_{\tau=1}^{m} |Y_{x_{i}}^{j}(\tau)|\,|Y_{x_{i}}^{k}(\tau)|\,\frac{\varpi_{\beta}\left(x_i, Y_{x_{i}}^k\right)}{\sum_{\ell=1}^M \varpi_{\beta}\left(x_i, Y_{x_{i}}^{\ell}\right)}.
\end{aligned}
\end{equation*}
Using Cauchy–Schwarz inequality yields
\begingroup\allowdisplaybreaks
\begin{align*}
    \sum\limits_{\tau=1}^{m} \big|\Theta_{i}^{j}(\tau)\big| & \leq \sum_{k=1}^{M} \left(\sum\limits_{\tau=1}^{m} |Y_{x_{i}}^{j}(\tau)|^{2} \right)^{\frac{1}{2}}
    \left(\sum\limits_{\tau=1}^{m} |Y_{x_{i}}^{k}(\tau)|^{2} \right)^{\frac{1}{2}} \frac{\varpi_{\beta}\left(x_i, Y_{x_{i}}^k\right)}{\sum_{\ell=1}^M \varpi_{\beta}\left(x_i, Y_{x_{i}}^{\ell}\right)}\\
    & = \sum_{k=1}^{M} \|Y_{x_{i}}^{j}\| \,\|Y_{x_{i}}^{k}\| \,\frac{\varpi_{\beta}\left(x_i, Y_{x_{i}}^k\right)}{\sum_{\ell=1}^M \varpi_{\beta}\left(x_i, Y_{x_{i}}^{\ell}\right)}
     \\
    & \leq \left( \sum_{k=1}^{M} \|Y_{i}^{j}\|^{2} \,\frac{\varpi_{\beta}\left(x_i, Y_{x_{i}}^k\right)}{\sum_{\ell=1}^M \varpi_{\beta}\left(x_i, Y_{x_{i}}^{\ell}\right)}\right)^{\frac{1}{2}} 
    \left( \sum_{k=1}^{M} \|Y_{x_{i}}^{k}\|^{2} \,\frac{\varpi_{\beta}\left(x_i, Y_{x_{i}}^k\right)}{\sum_{\ell=1}^M \varpi_{\beta}\left(x_i, Y_{x_{i}}^{\ell}\right)}\right)^{\frac{1}{2}}\\
    & =  \|Y_{x_{i}}^{j}\| \left( \sum_{k=1}^{M} \|Y_{x_{i}}^{k}\|^{2} \,\frac{\varpi_{\beta}\left(x_i, Y_{x_{i}}^k\right)}{\sum_{\ell=1}^M \varpi_{\beta}\left(x_i, Y_{x_{i}}^{\ell}\right)}\right)^{\frac{1}{2}}\\
    & \leq  \|Y_{x_{i}}^{j}\| \left( \sum_{k=1}^{M} \|Y_{x_{i}}^{k}\|^{2} \right)^{\frac{1}{2}} = \|Y_{x_{i}}^{j}\| \, \|\mathbf{Y}_{i}\|.
\end{align*}
\endgroup
The latter inequality together with \eqref{eq: tau coordinate} yield
\begin{equation*}
\begin{aligned}
    \langle \,  Y_{x_{i}}^{j}, \, b_{i}^{j}(x_{i}, \mathbf{Y}_{i})\, \rangle & = \sum\limits_{\tau = 1}^{m} \langle \,  Y_{x_{i}}^{j}, \, b_{i}^{j}(x_{i}, \mathbf{Y}_{i})\, \rangle_{\tau} \\
    & \leq -\lambda_{2}\,\underline{\xi}(\mathds{Y}) \, \|Y_{x_{i}}^{j}\|^{2} + \lambda_{2}\,\kappa\,  \|Y_{x_{i}}^{j}\| \, \|\mathbf{Y}_{i}\|,
\end{aligned}
\end{equation*}
and with \eqref{eq: scalar prod proof}, one obtains (using again Cauchy-Schwarz inequality)
\begin{equation*}
\begin{aligned}
    \langle\, \mathds{Y}, \, \mathscr{B}(x,\mathds{Y})\, \rangle 
    & = \sum\limits_{i=1}^{N}\sum\limits_{j=1}^{M} \; \langle \,  Y_{x_{i}}^{j}, \, b_{i}^{j}(x_{i}, \mathbf{Y}_{i})\, \rangle \\
    & \leq -\lambda_{2}\,\underline{\xi}(\mathds{Y}) \, \|\mathds{Y}\|^{2} + \lambda_{2}\,\kappa\,   \sum\limits_{i=1}^{N}\sum\limits_{j=1}^{M} \|Y_{x_{i}}^{j}\| \, \|\mathbf{Y}_{i}\|\\
    & \leq -\lambda_{2}\,\underline{\xi}(\mathds{Y}) \, \|\mathds{Y}\|^{2} + \lambda_{2}\,\kappa\,   \sum\limits_{i=1}^{N}\big(\|\mathbf{Y}_{i}\|\big) \, \big(\sqrt{M}\,\|\mathbf{Y}_{i}\|\big)\\
    & = -\lambda_{2}\,\underline{\xi}(\mathds{Y}) \, \|\mathds{Y}\|^{2} + \lambda_{2}\,\kappa\,\sqrt{M}\,  \|\mathds{Y}\|^{2} 
    = -\lambda_{2}\big( \underline{\xi}(\mathds{Y}) - \kappa\sqrt{M} \big)\|\mathds{Y}\|^{2}.
\end{aligned}
\end{equation*}
Therefore, in order to satisfy assumption (iii), it suffices to have 
\begin{equation*}
    \kappa < \frac{\underline{\xi}(\mathds{Y})}{\sqrt{M}}.
\end{equation*}
Let us note that $\underline{\xi}(\mathds{Y})$ tends to $1$ when the particles $Y_{x_{i}}^{j}$ tends to a consensus, and $\underline{\xi}(\mathds{Y})$ tends to $0$ only when a $Y_{x_{i}}^{j}$ gets (infinitely) far from the others, the latter being a behavior that is not likely to happen, provided we choose the radius $R_{2}$ of the truncation large enough compared to the closed ball containing the initial position of the particles. This will prevent $\xi(\mathds{Y})$ from getting smaller, allowing $\kappa$ to exist. \\
Finally, the coefficients in \eqref{ms_CBO} are bounded and locally H\"older continuous\footnote{In fact, they are locally Lipschitz continuous as can be seen from the results in \cite{huang2024consensus}.} with any exponent in the open interval $(0,1)$, hence the validity of assumption (iv). \\
Thus we can apply the result in \cite{rockner2021diffusion} with coefficients that are bounded in both $x,y$, and locally H\"older continuous with exponents $\theta,\delta\in (0,1)$, which concludes the proof. 
\end{proof}

\bibliography{bibliography}
\bibliographystyle{plain}

\end{document}